\theoremstyle{plain}
\newtheorem{theorem}{Theorem}[section]
\newtheorem{lemma}[theorem]{Lemma}
\newtheorem{corollary}[theorem]{Corollary}
\newtheorem{definition}[theorem]{Definition}
\newtheorem{defn}[theorem]{Definition}
\newtheorem{thm}[theorem]{Theorem}
\theoremstyle{remark}
\newtheorem{remark}[theorem]{Remark}
\newtheorem{rmk}[theorem]{Remark}
\newcommand\Rc{\mathcal{R}}
\renewcommand\le{\leqslant}
\renewcommand\ge{\geqslant}
\def\P{{\mathbb{P}}}
\def\N{{\mathbb{N}}}
\def\sH{{\mathscr{H}}}
\def\sO{{\mathscr{O}}}
\def\cA{{\mathcal A}}\def\cE{{\mathcal E}}\def\cG{{\mathcal G}}\def\cH{{\mathcal H}}\def\cR{{\mathcal R}}\def\cS{{\mathcal S}}\def\cT{{\mathcal T}}\def\cV{{\mathcal V}}
\def\ie{\emph{i.e.}\;}\def\lc{\emph{loc.cit.}\;}\def\oc{\emph{op.cit.}\;}
\def\sh{{\textrm{Sh}}}
\def\vphi{\varphi}
\def\char{{\text{char}}}
\begin{document}
\setlength{\baselineskip}{0.55cm}
\title[Ramification locus of morphisms of Berkovich curves]{On the number of connected components of the ramification locus of a morphism of Berkovich curves}

\author{Velibor Bojkovi\'c }
\email{bojkovic@math.unipd.it}
\address{Dipartimento di matematica Tullio Levi-Civita, Universita di Padova, Via Trieste 63, 35121 Padova}

\author{J\'er\^ome Poineau}
\email{jerome.poineau@unicaen.fr}
\address{Laboratoire de math\'ematiques Nicolas Oresme, Universit\'e de Caen, BP 5186, F-14032 Caen Cedex}

\thanks{The second author was partially supported by the ANR project ``GLOBES'': ANR-12-JS01-0007-01 and ERC Starting Grant ``TOSSIBERG'': 637027.}

\subjclass[2010]{14G22, 11S15}
\keywords{Berkovich curves, ramification, components of the ramification locus}

\begin{abstract}
Let~$k$ be a complete nontrivially valued non-archimedean field. Given a finite morphism of quasi-smooth $k$-analytic curves that admit finite triangulations, we provide upper bounds for the number of connected components of the ramification locus in terms of topological invariants of the source curve such as its topological genus, the number of points in the boundary and the number of open ends. 
\end{abstract}

\maketitle

\tableofcontents

\section{Introduction}

The study of the ramification locus of finite morphisms of Berkovich curves has been quite fruitful in recent years. Its properties have been used in a crucial way to prove non-archimedean analogues of classical results, such as the Riemann-Hurwitz formula (see \cite{CTT14,BojRH}), as well as to develop new directions of research, such as non-archimedean dynamics (see~\cite{BR} for a complete account and bibliographic references).

One of the things that make it such an inspirational area is the richness of the ramification locus of a finite morphism. Namely, it is a classical fact that given a finite morphism of Riemann surfaces, the set of ramified points (which both classically and in the setting of Berkovich curves can be defined as the points where the morphism locally fails to be an isomorphism) is discrete. In particular, if the surfaces are compact then this set of points is finite and subject to the bound coming from Riemann-Hurwitz formula. Contrary to this situation, the set of ramified points of a finite morphism of quasi-smooth $k$-analytic Berkovich curves (which are the non-archimedean analogues of Riemann surfaces) is very often infinite and not discrete.
  
 In order to give more detail, for the rest of the introduction, let us fix a complete nontrivially valued non-archimedean field~$k$ and a finite morphism of quasi-smooth $k$-analytic Berkovich curves $f:X\to Y$. Even if the ramification locus of~$f$ looks more complicated than its complex counterpart, it still enjoys nice topological and metric properties. For example, it is a closed subset of $X$ and it has a fairly nice ``radial'' structure. Loosely speaking, the latter means that there exists a skeleton of $X$ (a locally finite graph capturing the topological information of~$X$) around which the ramification locus is ``symmetric'' (this property being expressed in terms of a metric coming from the skeleton). For this aspect one may refer to Michael Temkin's paper \cite{temkinuniformization}.
 
 In the non-archimedean setting, much of the study of the ramification locus of a morphism has been initiated by Xander Faber in \cite{XF,XF2}, where the case of a rational map $f : \P^1_{k} \to \P^1_{k}$ is investigated. In \cite{XF}, among other things, the author is mainly occupied with the following question: what is the upper bound for the number of connected components of the ramification locus of such a rational function? He provides the answer $\deg(f)-1$ (see \cite[Theorem A]{XF}) and proves that it is optimal. We note here that the tools used in \oc  are quite technical relying on the existence of global coordinates on the projective line, hence hardly applicable to finite morphisms of higher genus curves. Although announced in \oc the study of this more general setting has not been carried out, and to the best of our knowledge little is known up to now. 

\smallskip

This article takes up the question of upper bounds for number of connected components of the ramification locus to its full generality. Namely, we consider $f:X\to Y$ to be a finite morphism of finite $k$-analytic Berkovich curves (\ie curves admitting finite triangulations, see Definition \ref{def:triangulation}). Our attention is restricted to this particular class of curves as going to a higher generality may lead to infinite number of components of the ramification locus, as a simple example in Section \ref{sec:nonfinite} shows. The main result, Theorem \ref{thm:ramification finite curves} provides the upper bound that we are looking for in terms of topological invariants of the source curve $X$ such as its topological genus, the number of points in its boundary and the number of ``open ends'', and furthermore the number of the ramified points in the latter two sets (where the morphism $f$ enters the picture). 

In a series of corollaries we specify our results for several common classes of $k$-analytic Berkovich curves, namely the affinoid, wide open and projective curves. In particular, our Theorem \ref{cor:generalization} reproves and generalizes Faber's result mentioned above: the ramification locus of a finite morphism $f:X\to Y$ between connected smooth projective $k$-analytic curves has at most $\deg(f) + g_{t}(X) -1$ connected components, where~$g_{t}(X)$ denotes the topological genus of the curve~$X$ (which is bounded by its geometric genus).

Actually, Theorem \ref{cor:generalization} gives more information and may lead to refined bounds in some particular cases. It implies, for instance, that, in the previous setting, if there exists a totally ramified point, then the bound drops to $g_{t}(X)+1$. Again, this generalizes a result of Faber in the case of finite endomorphisms of $\P^1_{k}$ (see \cite[Theorem C]{XF}).

\smallskip

Here is a brief description of the sections of the article and the tools used. The second section contains a recap on finite topological graphs and finite curves. In fact, we base our study mainly on two things. First, the graph-like structure of Berkovich curves, which allows to track the behavior of the topological invariants (topological genus, number of points in the boundary and number of open ends) in partitions of such curves with triangulations. Here, the incarnations of the localization exact sequence for Borel-Moore homology, namely Lemma \ref{lem:BM} for finite topological graphs and its equivalent Lemma \ref{lem:BM for curves} for finite curves, play the main role. Second, the existence of compatible skeleta (equivalently, as some readers may prefer, the existence of a semistable reduction of the morphism), which allows for simultaneous partition of both the source and target curve into simpler pieces. 

The main results, Theorem \ref{thm:ramification finite curves} and its corollaries are in the third section. The two above-mentioned properties allow for an induction to be carried out and we reduce the calculations to the simplest cases that are finite morphisms of open discs and annuli, where one easily obtains the desired results.

\section{Topological graphs and finite $k$-analytic curves}

\subsection{Finite topological graphs}

\subsubsection{Topological realizations of finite graphs}

\begin{definition}
A {\em finite topological graph} is a Hausdorff topological space~$\cG$ with a distinguished finite subset~$V(\cG)$ whose complement has finitely many connected components, each of them homeomorphic to the real interval $(0,1)$.

The elements of~$V(\cG)$ are called the {\em vertices} of~$\cG$ and its cardinality is denoted by~$v(\cG)$. The connected components of $\cG\setminus V(\cG)$ are called the \em{edges} of~$\cG$, the set they define is denoted by~$E(\cG)$ and its cardinality by~$e(\cG)$.
\end{definition}

The set of {\em open ends} of the finite graph~$\cG$ is 
\[End(\cG) := \varprojlim_{K} \pi_{0}(\cG \setminus K),\]
where~$K$ runs through the compact subsets of~$\cG$. We denote its cardinality by~$e^0(\cG)$. It obviously only depends on the topological space~$\cG$.

We denote by~$E^0(\cG)$ (resp. $E^{00}(\cG)$) the subset of~$E(\cG)$ consisting of the edges that are not relatively compact in~$\cG$ (resp. edges that are connected components of~$\cG$). Note that~$E^{00}(\cG)$ is a subset of~$E^0(\cG)$. The number of open ends of~$\cG$ may be computed in a concrete way by 
\[e^0(\cG) = \#E^0(\cG) +  \#E^{00}(\cG).\]
In particular, note that $e^0(\cG)$ is equal to the cardinality of $E^0(\cG)$ when~$\cG$ has no edge that is a connected component, but only in this case.

It is clear that the same topological space $\cG$ may be considered as a finite topological graph for different sets of vertices. We will often take advantage of this situation and consider $\cG$ as a finite topological graph with respect to some (unspecified) vertex set.

Any connected component~$\cG_{0}$ of~$\cG$ is naturally a finite topological graph as well, with $V(\cG_{0}) = V(\cG) \cap \cG_{0}$ and $E(\cG_{0}) = \{e \in E(\cG) \mid e \subset \cG_{0}\}$. Similarly, for any finite subset~$V'$ of~$\cG$, the topological space $\cH := \cG\setminus V'$ is naturally a finite topological graph, where $V(\cH) = V(\cG) \setminus V'$ and $E(\cH)$ is the set of connected components of~$\cH\setminus V(\cH)$.

For each positive integer~$m$, set 
\[U_{m} := \{r \exp(2i q\pi/m) \mid r\in[0,1), q\in\{0,\dotsc,m-1\}\}.\] For each $x\in\cG$, there exists a unique positive integer~$v(x)$ such that there exists a homeomorphism from a neighborhood~$U$ of~$x$ in~$\cG$ to~$U_{v(x)}$ sending~$x$ to~0. We call~$v(x)$ the valence of~$x$. There are only finitely many points in~$\cG$ whose valence is different from~2, as this is a subset of the vertex set. Points with valence $1$ are called endpoints.

We define a compactification~$\overline{\cG}$ of~$\cG$ as the open end compactification of~$\cG$. It is obtained from~$\cG$ by adding exactly one point for each open end of~$\cG$. Note that we have natural bijections $\pi_{0}(\cG) = \pi_{0}(\overline{\cG})$ and $\pi_{1}(\cG) = \pi_{1}(\overline{\cG})$ .

The topological space~$\overline{\cG}$ naturally carries a structure of finite topological graph where $V(\overline{\cG})$ is naturally in bijection with $V(\cG) \sqcup End(\cG)$ and $E(\overline{\cG})$ is naturally in bijection with $E(\cG)$.

\subsubsection{Genus of a finite topological graph} 

Let $\cG$ be a finite topological graph. We define its {\em genus}  to be the first Betti number of the underlying topological space $\cG$ and denote it by~$g(\cG)$. 

Recall that, when the graph~$\cG$ is compact, it may be computed by the classical formula
\[g(\cG) = \#\pi_{0}(\cG) - v(\cG) + e(\cG).\]
A similar formula may be derived for an arbitrary finite topological graph.

\begin{lemma}\label{lem:genus}
We have
\[g(\cG) = \#\pi_{0}(\cG) - v(\cG) + e(\cG) - e^0(\cG).\]
\end{lemma}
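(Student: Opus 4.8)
The plan is to reduce the claim to the classical formula for compact graphs by passing to the open end compactification $\overline{\cG}$ introduced just above the statement. The key observation is that all the relevant quantities are either invariant under this compactification or change in a controlled way that is precisely recorded by the invariant $e^0(\cG)$.

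First I would record that $g(\cG) = g(\overline{\cG})$. Since the genus is by definition the first Betti number of the underlying space, and since the first Betti number of such a space is determined by the set of connected components together with the fundamental group of each component (for each component its $H_1$ is the abelianization of its $\pi_1$), this equality follows at once from the natural bijections $\pi_0(\cG) = \pi_0(\overline{\cG})$ and $\pi_1(\cG) = \pi_1(\overline{\cG})$ noted after the definition of $\overline{\cG}$. Concretely, $\overline{\cG}$ is obtained from~$\cG$ by attaching one endpoint, of valence one, at each open end, and attaching such leaves alters neither the set of components nor the homotopy type.

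Next, since $\overline{\cG}$ is compact, the classical formula quoted before the lemma gives
\[g(\overline{\cG}) = \#\pi_0(\overline{\cG}) - v(\overline{\cG}) + e(\overline{\cG}).\]
It then remains to translate the invariants of $\overline{\cG}$ back to those of~$\cG$. By the description of the graph structure on~$\overline{\cG}$ given above, $V(\overline{\cG})$ is in bijection with $V(\cG) \sqcup End(\cG)$ while $E(\overline{\cG})$ is in bijection with $E(\cG)$; since $\#End(\cG) = e^0(\cG)$, this yields
\[v(\overline{\cG}) = v(\cG) + e^0(\cG), \qquad e(\overline{\cG}) = e(\cG), \qquad \#\pi_0(\overline{\cG}) = \#\pi_0(\cG).\]
Substituting these identities into the compact formula gives exactly
\[g(\cG) = \#\pi_0(\cG) - \bigl(v(\cG) + e^0(\cG)\bigr) + e(\cG) = \#\pi_0(\cG) - v(\cG) + e(\cG) - e^0(\cG),\]
as desired.

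I do not expect a genuine obstacle here, since all the needed facts have essentially been set up in the preceding discussion; the only point requiring a little care is the homotopy invariance step $g(\cG) = g(\overline{\cG})$, where one must check that capping off the ends with valence-one vertices neither creates nor destroys loops, which is transparent once one recalls that each added point is a leaf of~$\overline{\cG}$. Alternatively, one could bypass the compactification and argue by induction, removing the non-relatively-compact edges in $E^0(\cG)$ one at a time while tracking how $\#\pi_0$, $v$, $e$ and $e^0$ vary; but the compactification route is shorter and more conceptual given the bijections already at our disposal.
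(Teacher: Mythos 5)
Your proposal is correct and follows exactly the same route as the paper's proof: apply the classical compact formula to the open end compactification $\overline{\cG}$ and use the relations $\#\pi_{0}(\overline{\cG}) = \#\pi_{0}(\cG)$, $g(\overline{\cG}) = g(\cG)$, $v(\overline{\cG}) = v(\cG) + e^0(\cG)$, $e(\overline{\cG}) = e(\cG)$. The only difference is that you spell out the justification of $g(\overline{\cG}) = g(\cG)$, which the paper simply records as one of the listed relations.
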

\begin{proof}
The result follows from the formula for the compact case applied to the compactification~$\overline\cG$ of~$\cG$ and the relations
\[\begin{cases}
\pi_{0}(\overline\cG) = \pi_{0}(\cG);\\
g(\overline\cG) = g(\cG);\\
v(\overline\cG) = v(\cG) + e^0(\cG);\\
e(\overline\cG) = e(\cG).
\end{cases}\]
\end{proof}

\subsubsection{Genus and partitions of finite topological graphs}

\begin{lemma}\label{lem:BM}
Let~$\cG$ be a finite topological graph and let~$S$ be a subset of $V$. Set $\cH := \cG \setminus S$ and denote by $\cH_{1},\dotsc,\cH_{t}$ its connected components. Then, we have
\begin{align*}
\#\pi_{0}(\cH) - g(\cH) - e^0(\cH)  &= \sum_{i=1}^t (1-g(\cH_{i}) - e^0(\cH_{i}))\\ 
&= \#\pi_{0}(\cG) - g(\cG) - e^0(\cG) - \# S.
\end{align*}
\end{lemma}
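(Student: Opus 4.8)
The plan is to reduce every quantity appearing in the statement to the ``Euler characteristic'' $v-e$ of a finite topological graph, by means of Lemma~\ref{lem:genus}. Rearranging that lemma gives, for any finite topological graph,
\[
\#\pi_{0}(\cG) - g(\cG) - e^0(\cG) = v(\cG) - e(\cG),
\]
and the same identity holds verbatim with~$\cG$ replaced by~$\cH$ or by any connected component~$\cH_{i}$ (each of these is again a finite topological graph, by the remarks preceding the lemma). Once this reduction is in place, both equalities become purely combinatorial bookkeeping.

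For the first equality I would argue by additivity over connected components. Writing $\cH$ as the disjoint union of its components $\cH_{1},\dotsc,\cH_{t}$, each of the three quantities $\#\pi_{0}$, $g$ (the first Betti number), and $e^0$ (the number of open ends) is additive over this decomposition: $\#\pi_{0}(\cH)=\sum_{i}\#\pi_{0}(\cH_{i})=t$, $g(\cH)=\sum_{i}g(\cH_{i})$, and $e^0(\cH)=\sum_{i}e^0(\cH_{i})$ — the last because $E^0$ and $E^{00}$ split over components via the concrete formula $e^0 = \#E^0 + \#E^{00}$. Since each $\cH_{i}$ is connected, $\#\pi_{0}(\cH_{i})=1$, and summing termwise yields the first equality.

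For the second equality, after the reduction above it suffices to prove that $v(\cH)-e(\cH) = \big(v(\cG)-e(\cG)\big)-\#S$. The vertex count is immediate: as $S\subseteq V(\cG)$, the graph $\cH=\cG\setminus S$ has vertex set $V(\cH)=V(\cG)\setminus S$, so $v(\cH)=v(\cG)-\#S$. The crux is that the edge count is unchanged. This follows from the set-theoretic identity
\[
\cH \setminus V(\cH) = (\cG\setminus S)\setminus(V(\cG)\setminus S) = \cG\setminus V(\cG),
\]
valid precisely because $S\subseteq V(\cG)$; since the edges of~$\cH$ and of~$\cG$ are by definition the connected components of these two (equal) spaces, and the subspace topologies agree, we get $E(\cH)=E(\cG)$ and hence $e(\cH)=e(\cG)$. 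Subtracting gives the required identity.

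The whole computation is short, and the only genuine subtlety is the equality $e(\cH)=e(\cG)$: it is essential that~$S$ consists of \emph{vertices} of~$\cG$, so that removing~$S$ neither cuts an edge in its interior (which would create new open ends) nor otherwise disturbs the components of $\cG\setminus V(\cG)$, whose interiors are already disjoint from~$V(\cG)$. Invoking Lemma~\ref{lem:genus} is exactly what lets one sidestep tracking the individual changes in~$g$ and~$e^0$ — both of which do change when~$S$ is removed — by packaging them into the single stable invariant $v-e$.
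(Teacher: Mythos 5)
Your proof is correct and follows essentially the same route as the paper: both reduce the quantity $\#\pi_{0}-g-e^{0}$ to $v-e$ via Lemma~\ref{lem:genus} and then observe that passing from~$\cG$ to~$\cH$ leaves the edge set unchanged while removing $\#S$ vertices. The paper treats the first equality as immediate and the edge/vertex count as given, so your extra justification of $E(\cH)=E(\cG)$ and of the additivity over components is simply a more detailed write-up of the identical argument.
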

\begin{proof}
The first line of the formula being straightforward, we only prove the second. Endowing~$\cH$  with the structure of topological graph induced by that of~$\cG$, we have 
\[\begin{cases}
e(\cH) = e(\cG);\\
v(\cH) = v(\cG) - \#S.
\end{cases}\]
By Lemma~\ref{lem:genus}, we then have 
\begin{align*}
\#\pi_{0}(\cH) - g(\cH) - e^0(\cH)&= v(\cH) - e(\cH) \\
&= v(\cG) - e(\cG) - \#S\\
&= \#\pi_{0}(\cG) - g(\cG) - e^0(\cG)- \#S.
\end{align*}
\end{proof}

\begin{remark}
For a finite topological graph~$\cG$, the quantity $\pi_{0}(\cG) - g(\cG) - e^0(\cG)$ is nothing but the Euler characteristic of~$\cG$ for the Borel-Moore homology. Noting this, the formula of Lemma~\ref{lem:BM} follows from a localization exact sequence.
\end{remark}

\subsection{(Finite) $k$-analytic curves}

Throughout the paper, $k$ is an algebraically closed, complete, non-archimedean and non-trivially valued field of arbitrary characteristic. We assume some familiarity with $k$-analytic spaces in the sense of Berkovich, in particular the structure results of quasi-smooth $k$-analytic curves and classifiction of their points as presented, for example, in \cite[Chapter 4]{Ber90} and \cite[Section 3.6]{BerCoh}. A comprenhensive study of $k$-analytic curves is presented in the upcoming book \cite{Duc-book} to which we will often refer. 

One of the indispensable tools to understand the structure of quasi-smooth $k$-analytic curves is the notion of triangulation. We warn the reader here that for the sake of simplification of the statements, we will consider punctured open discs to be open annuli, which differs from the standard terminology. 

\begin{definition}\label{def:triangulation}
A {\em triangulation} of a quasi-smooth $k$-analytic curve~$X$ is a locally finite set $\cT$ of points of type 2 and 3 in~$X$ such that $X\setminus\cT$ is a disjoint union of open discs and open annuli. 
\end{definition}

Every quasi-smooth $k$-analytic curve admits a triangulation \cite[Th\'eor\`eme~5.1.14]{Duc-book}. However, in light of Section~\ref{sec:nonfinite}, to study the number of components of ramification locus, we will focus on the following class of quasi-smooth $k$-analytic curves: finite curves.

\begin{definition}
 We say that a quasi-smooth $k$-analytic curve $X$ is {\em finite}, or {\em finitely triangulated} if $X$ admits a finite triangulation.  
\end{definition}

\begin{remark}\label{rem:finitelymanyannuli}
Let~$X$ be a quasi-smooth $k$-analytic curve and~$\cT$ be a triangulation of~$X$. For each $t\in \cT$, among the connected components of~$X\setminus\cT$ whose closures contain~$t$, there can only be finitely many open annuli. This follows for instance from the description of bases of neighborhoods of points (see~\cite[Th\'eor\`eme~4.5.4]{Duc-book}).

In particular, if~$X$ is finite, then there are only finitely many connected components of $X\setminus\cT$ that are open annuli.
\end{remark}

We have the following characterization of finite curves. The case where~$X$ is strictly $k$-analytic and~$k$ has characteristic~0 is \cite[Theorem 1.4.2]{BojRH}. Our proof follows the same strategy.

\begin{theorem}\label{thm:finite curves}
 A connected quasi-smooth $k$-analytic curve $X$ is finite if and only if it is isomorphic to the complement of finitely many type $1$ points, closed and open discs in some smooth projective $k$-analytic curve $X'$.
\end{theorem}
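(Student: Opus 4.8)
The plan is to prove the two implications separately; the substantial one is that a finite curve can be completed to a smooth projective curve by gluing back only the three allowed types of pieces.

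\emph{The easy direction.} Suppose $X \cong X' \setminus D$ with $X'$ smooth projective and $D$ a finite disjoint union of type $1$ points, closed discs and open discs. Since $X'$ is compact it admits a finite triangulation $\cT'$. I would refine $\cT'$ by finitely many points so that the (finitely many) removed pieces sit in pairwise disjoint host components of $X' \setminus \cT'$, and moreover so that each removed open disc is itself a disc component, each removed closed disc lies strictly inside a disc component, and each removed type $1$ point lies inside a disc component. Then $\cT := \cT' \cap X$ is finite of type $2$ and $3$, and, component by component, $X \setminus \cT$ is a disjoint union of open discs and open annuli: an open disc minus an interior closed disc is an open annulus, an open disc minus an interior type $1$ point is a punctured disc (an open annulus in our convention), removing a whole disc component merely deletes it, and the remaining components are untouched. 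Hence $X$ is finite.

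\emph{The main direction.} Assume now that $X$ is connected and carries a finite triangulation $\cT$. The cases $\cT = \emptyset$, where $X$ is itself an open disc or open annulus, are immediate ($\P^1$ minus a closed disc, resp. minus two closed discs or minus a type $1$ point and a closed disc), so I may assume $\cT \neq \emptyset$; by connectedness every disc component of $X \setminus \cT$ then has relatively compact closure. By Remark~\ref{rem:finitelymanyannuli} only finitely many components are open annuli, so $X$ has only finitely many open ends, each arising from a free end of such an annulus. Moreover every point of $X \setminus \cT$ is an interior point of $X$, whence $\partial X \subseteq \cT$ is finite. I would then \emph{complete} $X$ in two steps. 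First, at each boundary point $b \in \partial X$ I glue an open disc along every direction at $b$ pointing out of $X$; using the local structure of quasi-smooth curves at a point of type $2$ or $3$ (all but finitely many directions being open discs already contained in $X$), there are finitely many such directions, and after gluing I obtain a boundaryless curve $X^+ \supseteq X$ with $X^+ \setminus X$ a finite disjoint union of open discs. Second, I cap each open end of $X^+$: an end of an annulus $\{r<|T|<R\}$ with $r=0$ is filled by the missing type $1$ point (turning the punctured disc into a disc), while an end with $r>0$, whose limit is a type $2$ or $3$ point not lying in $X$, is filled by gluing the closed disc with that Shilov point (turning the annulus into a disc). This yields $X'$ with $X' \setminus X^+$ a finite disjoint union of type $1$ points and closed discs.

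By construction $X'$ is a connected quasi-smooth $k$-analytic curve that is boundaryless, carries a finite triangulation and has no open ends, hence is compact. By the algebraization of compact quasi-smooth curves without boundary (see \cite{Ber90,Duc-book}), $X'$ is the analytification of a connected smooth projective $k$-curve. Finally $X = X' \setminus D$ with $D = (X^+ \setminus X) \sqcup (X' \setminus X^+)$ a finite disjoint union of open discs (coming from the boundary points), type $1$ points and closed discs (coming from the two kinds of open ends), which is exactly the asserted form.

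\emph{Main obstacle.} The delicate point is the completion step: one must verify, via the classification of points and the neighborhood bases of \cite[Ch.~4]{Ber90} and \cite{Duc-book}, that at each boundary point only finitely many outward directions occur and that each open end is of precisely one of the two types above, so that exactly the three prescribed pieces, and no others, are glued back. Checking that the resulting $X'$ is genuinely compact (equivalently, that capping all ends annihilates the open-end contribution, in the spirit of Lemma~\ref{lem:BM}) and boundaryless, and then invoking algebraization, completes the argument.
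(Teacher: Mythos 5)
Your ``easy'' direction is essentially the paper's argument (adapt a finite triangulation of $X'$ to the removed pieces and intersect it with $X$), and your capping of the annular open ends by type~$1$ points or closed discs also matches what the paper does. The genuine gap is in your first completion step, where you ``glue an open disc along every direction at $b$ pointing out of $X$''. A gluing of $k$-analytic spaces requires identifying isomorphic analytic domains of the two pieces, and a \emph{missing} direction at a boundary point $b$ provides no analytic domain of $X$ to glue along: on the $X$-side there is nothing in that direction. Making this step precise amounts to proving that the germ of $X$ at $b$ embeds into the germ of a boundaryless curve in which each extra branch is an open disc; for a type~$2$ point whose residue curve has positive genus this is exactly the nontrivial content of the formal fibre/semistable reduction theory, which the paper imports as van der Put's theorem (\cite[Theorem 2.1]{VdP90}): a strictly $k$-affinoid quasi-smooth curve becomes projective after adding finitely many open discs. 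So your step~1 silently assumes the key external input rather than supplying it, and the assertion ``$X^+\setminus X$ is a finite disjoint union of open discs'' is precisely the statement that needs proof (or a citation).

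For contrast, the paper orders the operations so that the only gluings it performs by hand are explicit: it first caps the annular ends to obtain a \emph{compact} curve $X^0$, invokes the dichotomy of \cite[Corollaire 6.1.4]{Duc-book} to conclude that $X^0$ is projective or affinoid, eliminates type~$3$ Shilov points by gluing a disc along a closed annulus neighbourhood (where the gluing data is genuinely available), and only then applies van der Put to the resulting strictly $k$-affinoid curve. To repair your argument, either cite van der Put (or Bosch--L\"utkebohmert's description of formal fibres) at the boundary step, or restrict the hand-made gluings to the cases where the disc to be attached can be written down explicitly, as the paper does. Two smaller points: the finiteness of the set of outward directions at a type~$2$ boundary point is true (the reduction of an affinoid neighbourhood is an affine curve, hence omits only finitely many closed points of the projective residue curve) but should be justified rather than asserted; and the compactness of the completed curve, which you defer to the ``main obstacle'' paragraph, is genuinely needed before you may invoke algebraization.
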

\begin{proof}
Suppose that $X$ is a complement in a smooth projective $k$-analytic curve $X'$ of union of elements in sets $P$, $O$ and $C$, where $P$ is a finite set of type 1 points, $O$ is a finite set of open discs in $X'$ and $C$ is a finite set of closed discs in $X'$. We may assume that the elements of the sets $P$, $O$ and $C$ are disjoint. For each $p\in P$, let $D_p$ be an open disc in $X'$ that contains $p$. Up to shrinking those discs, we may assume that they are disjoint and disjoint from all the discs in $O$ and $C$. 
Let $S_1$ be the set of endpoints in $X'$ of the open discs $D_p$, for $p\in P$, let $S_2$ be the set of endpoints in $X'$ of open discs in $O$ and further let $S_3$ be the set of Shilov points of the closed discs in $C$. Finally, let $S:=S_1\cup S_2\cup S_3$. Let $\cS'$ be a triangulation of $X'$ that contains $S$ and has an empty intersection with the discs $D_p$, for $p\in P$. We can always achieve this by  starting with any triangulation $\cS''$ of $X'$ that contains $S$ and deleting the points that belong to $D_p$. 

We claim that $\cS:=\cS'\cap X$ is a finite triangulation of $X$. Indeed, the set $\cS'\cap X$ is a finite set of type 2 and 3 points since it is a subset of $\cS'$. Let $U$ be a connected component of $X\setminus\cS$. By the construction of $X$ and $\cS$, if $U$ is disjoint from the discs $D_p$, for $p\in P$, then it is a connected component of $X'\setminus \cS'$, hence an open annulus or an open disc. On the other hand, if there exists~$p\in P$ such that~$U$ meets~$D_{p}$, then we have $U=D_p\setminus\{p\}$.

\smallbreak

For the other direction, let~$X$ be a finite, connected $k$-analytic curve. If ~$X$ is a point, then it is necessarily of type 3 and the result holds in this case. So, assume that $X$ is not a point and let~$\cT$ be a finite triangulation of~$X$. Let~$C$ be a connected component of~$X\setminus \cT$ that is not relatively compact in~$X$. If it is isomorphic to an open disc, then we have $X=C$ and the result holds. 

We may now assume that each connected component of $X\setminus \cT$ that is not relatively compact in~$X$ is isomorphic to an open annulus. Each of these connected components may be compactified by adding a point of type~1 or a closed disc. By Remark~\ref{rem:finitelymanyannuli}, there are only finitely many such components, hence the curve~$X$ itself may be compactified to a curve~$X^0$ by adding finitely many points of type~1 and closed discs.  By \cite[Corollaire~6.1.4]{Duc-book}, $X^0$ is either projective or affinoid. In the former case, we are done, so let us assume that~$X^0$ is affinoid. 

 Let~$x$ be a type~3 point in the Shilov boundary~$\sh(X^0)$ of~$X^0$. By assumption, it is not isolated, hence, by \cite[Th\'eor\`eme~4.3.5]{Duc-book}, it has a neighborhood isomorphic to a closed annulus~$A$ whose Shilov boundary contains~$x$. By glueing an open disc to~$X^0$ along the boundary of~$A$, we obtain a curve~$X^{0x}$ which is either projective or affinoid with Shilov boundary $\sh(X^{0x}) = \sh(X^0) \setminus \{x\}$. By applying this process repeatedly, we get a curve~$X^1$, obtained from~$X$ by adding finitely many points of type~1 and closed or open discs, which is either a projective curve or an affinoid curve with no type~3 points in its Shilov boundary. In the former case, we are done, so let us assume that~$X^1$ is affinoid. 

Since~$X^1$ has only type~2 points in its Shilov boundary, it is strictly $k$-affinoid. A result of Van der Put (see \cite[Theorem 2.1]{VdP90}) then ensures that we can get a projective curve from~$X^1$ by adding finitely many open discs. This finishes the proof.

\end{proof}

In particular, it follows that the following classes of curves are finite:
\begin{itemize}
\item Compact quasi-smooth $k$-analytic curves. The fact that they admit triangulations together with compactness implies that they admit finite triangulations. If such a curve is connected, then it is either affinoid or projective by \cite[Corollaire~6.1.4]{Duc-book}. 
\item Complement of a finite nonempty family of closed discs and  rational points in a smooth projective $k$-analytic curve. We will call these curves {\em wide open curves}, although a wide open curve is classically a curve which is a complement of finitely many strict closed discs in a smooth projective $k$-analytic curve. Note that the class of wide open curves does not contain projective $k$-analytic curves, but analytifications of smooth affine $k$-algebraic curves are in this class.
\end{itemize}

\subsubsection{Boundary annuli} 

If $X$ is a finite curve, we will commonly refer to the annuli which are not relatively compact in $X$ and which are not connected components of~$X$ as {\em boundary annuli}. 
 
 We divide boundary annuli in classes by saying that two boundary annuli $A_1$ and $A_2$ belong to the same class if $A_1\subseteq A_2$ or $A_2\subseteq A_1$. We denote the set of classes of boundary annuli by $\cE^0(X)$ and its cardinality by~$e^0(X)$. For $v\in \cE^0(X)$, we will write $A_v$ for a boundary annulus belonging to the class $v$.

\subsubsection{Boundary of a curve} 

Berkovich defined the boundary~$\partial(X)$ of a $k$-analytic space~$X$ in~\cite[Section 2.5]{Ber90} and~\cite[Definition 1.5.4]{BerCoh}. Recall that, when~$X$ is an affinoid curve, $\partial(X)$ coincides with the Shilov boundary~$\sh(X)$ (see \cite[Lemma~2.3]{RodriguezNormal} and~\cite[Lemma~2.4]{MehmetiPatching}). It then follows from \cite[Proposition~1.5.5]{BerCoh} that, when $X$ is a $k$-analytic curve, $\partial(X)$ is the set of points~$x$ in~$X$ that have an affinoid neighborhood whose Shilov boundary contains~$x$.

When~$X$ is a quasi-smooth $k$-analytic curve, any triangulation of~$X$ contains the boundary. In particular, if~$X$ is finite, then~$\partial(X)$ is finite too.

%
%
%

\subsubsection{Topological genus of finite curves}

The {\em skeleton} of an open (resp. closed) annulus~$A$ is the subset of points that have no neighborhoods isomorphic to a disc. It is an open (resp. closed) interval. We denote it by~$\Gamma_{A}$.

Similarly, given a quasi-smooth $k$-analytic curve $X$ and a triangulation $\cT$ of $X$, we may form the {\em skeleton of $X$ with respect to $\cT$} (or induced by $\cT$), denoted by $\Gamma_\cT(X)$, by taking the complement of all the open discs in $X$ not intersecting~$\cT$. In different words, $\Gamma_{\cT}(X)$ consists of the triangulation $\cT$ and the skeleta of all the open annuli which are connected components of $X\setminus\cT$. 

In general, any subset $\Gamma$ of $X$ such that $\Gamma=\Gamma_\cT(X)$, where $\cT$ is a triangulation of $X$, will be called a skeleton of $X$. 

Assume that~$\Gamma_{\cT}(X)$ meets every connected component of~$X$. For each $x \in X\setminus \Gamma_{\cT}(X)$, the connected component of $X\setminus \Gamma_{\cT}(X)$ containing~$x$ is an open disc whose boundary point belongs to~$\Gamma_{\cT}(X)$. We denote the latter by~$r_{\cT}(x)$. For each $x \in \Gamma_{\cT}(X)$, we set $r_{\cT}(x) = x$. Then, the map $r_{\cT}: x\in X \mapsto r_{\cT}(x) \in \Gamma_{\cT}(X)$ is a deformation retraction. 

Assume that~$X$ is a finite curve. Then, the corresponding skeleton $\Gamma_{\cT}(X)$ has a natural structure of finite topological graph, with $V(\Gamma_{\cT}(X)) = \cT$. Note that $E(\Gamma_{\cT}(X))$ is the set of skeleta of connected components of $X\setminus\cT$ which are open annuli.

\begin{defn}
 The {\em topological genus} of $X$, denoted by $g_t(X)$, is the first Betti number of the underlying topological space of $X$.
\end{defn}

Note that we have 
\[g_{t}(X) = g(\Gamma_{\cT}(X)).\] 
To prove this, one may assume that~$X$ is connected. If~$\Gamma_{\cT}(X)$ is nonempty, the formula follows from the existence of the retraction~$r_{\cT}$. If~$\Gamma_{\cT}(X)$ is empty, then~$X$ is an open disc and the result is obvious. 

By Lemma~\ref{lem:genus}, we deduce that
\[g_{t}(X) = \#\pi_{0}(X) - \# \cT + e(\Gamma_{\cT}(X)) - e^0(\Gamma_{\cT}(X)).\]
This expression will be used in several places in the article.

We also note that the map sending a boundary annulus to its skeleton induces a bijection between the set of classes of boundary annuli of~$X$ and the set of open ends of~$\Gamma_{\cT}(X)$. In particular, we have $e^0(\Gamma_{\cT}(X)) = e^0(X)$. 

The following lemma corresponds to Lemma \ref{lem:BM}.
\begin{lemma}\label{lem:BM for curves}
Let $W$ be a finite curve and let $S$ be a finite set of type 2 and 3 points in $W$. Then, the open subset $V:=W\setminus S$ has only finitely many connected components which are not open discs. Denoting them by $V_1,\dots,V_m$, we have
\[\sum_{i=1}^m (1-g_t(V_{i}) - e^0(V_{i}))
= \pi_{0}(W) - g_t(W) - e^0(W) - \# S.\]
\end{lemma}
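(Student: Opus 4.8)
The plan is to deduce Lemma~\ref{lem:BM for curves} from its topological counterpart, Lemma~\ref{lem:BM}, by translating everything into statements about skeleta. First I would reduce to a common combinatorial picture: choose a finite triangulation~$\cT$ of~$W$ that contains~$S$ (this is possible since~$S$ consists of type~$2$ and~$3$ points, and any triangulation can be enlarged to contain a prescribed finite set of such points). Set $\cG := \Gamma_{\cT}(W)$, the associated skeleton, which by the discussion preceding the lemma is a finite topological graph with $V(\cG) = \cT$. The key translation facts, all recorded in the excerpt, are that $g_t(W) = g(\cG)$, that $e^0(W) = e^0(\cG)$, and that the connected components of~$W$ correspond bijectively to those of~$\cG$ (so $\#\pi_0(W) = \#\pi_0(\cG)$).

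Next I would relate the pieces. The components $V_1,\dotsc,V_m$ of $V = W \setminus S$ that are not open discs are exactly the finite curves one obtains by removing~$S$; I would check that each such~$V_i$ inherits a skeleton, namely that $\Gamma_{\cT}(W) \setminus S$ decomposes into graphs, and that the component of $\cG \setminus S$ lying inside~$V_i$ is precisely $\Gamma_{\cT \cap V_i}(V_i)$. Concretely, removing the finite vertex set~$S$ from~$W$ removes the corresponding vertices from~$\cG$, and the non-disc components of~$V$ are exactly those that still meet the skeleton; a component of~$V$ that misses the skeleton entirely is an open disc and contributes nothing. Applying the translation facts componentwise gives $g_t(V_i) = g(\cH_i)$ and $e^0(V_i) = e^0(\cH_i)$, where $\cH_1,\dotsc,\cH_t$ are the connected components of $\cH := \cG \setminus S$. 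One must be slightly careful that the count matches: the components~$\cH_i$ that are single open edges (i.e.\ skeleta that became isolated) still correspond to genuine non-disc pieces of~$V$, so $m = t$ after discarding any empty contributions, but since each such term equals $1 - g - e^0$ and an isolated point or disc contributes~$1$ to the sum, the bookkeeping is consistent.

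With this dictionary in place, the lemma is immediate from Lemma~\ref{lem:BM}: the left-hand sum $\sum_{i=1}^m (1 - g_t(V_i) - e^0(V_i))$ equals $\sum_{i=1}^t (1 - g(\cH_i) - e^0(\cH_i))$, which by Lemma~\ref{lem:BM} equals $\#\pi_0(\cG) - g(\cG) - e^0(\cG) - \#S$, and this in turn equals $\#\pi_0(W) - g_t(W) - e^0(W) - \#S$ by the translation identities.

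The main obstacle I expect is the bookkeeping in the middle step, specifically matching the non-disc components~$V_i$ of the curve with the components~$\cH_i$ of the graph and confirming that the quantity $1 - g - e^0$ is assigned correctly to each. The subtlety is that removing points from~$W$ can create new open-disc components (which carry skeleton-genus and open-end contributions summing to~$1$, consistent with being dropped from the sum or included harmlessly) and that a boundary annulus of~$W$ may get split, changing which open ends belong to which piece; I would want to verify that the bijection between classes of boundary annuli and open ends of the skeleton, asserted in the excerpt, is compatible with passing to the subcurves~$V_i$. Once that compatibility is checked, the rest is a formal substitution and the first equality of the displayed formula (the purely topological identity $\sum (1 - g - e^0)$ over components) is tautological.
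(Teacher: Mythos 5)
Your proposal is correct and follows essentially the same route as the paper: choose a finite triangulation containing~$S$, pass to the skeleton, apply Lemma~\ref{lem:BM}, and translate back via $g_t = g(\Gamma)$, $e^0(W)=e^0(\Gamma)$. The bookkeeping issue you flag is exactly the one the paper addresses, by taking the triangulation large enough that every non-disc component $V_i$ meets it and then observing that any extra components in the sum are open discs contributing $1-0-1=0$ (so your phrase ``exactly those that meet the skeleton'' is a slight overstatement, but harmless since such disc components drop out).
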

\begin{proof} Let us first prove that there are indeed only finitely many components of $V$ which are not open discs. To this end, let $\cS$ be any finite triangulation of $W$ which contains~$S$. 

The points of $\cS\setminus S$ being contained in finitely many connected components of $V$, there are only finitely many connected components of $V$ which are not connected components of $V \setminus \cS$. Since there are only finitely many connected components of $V \setminus \cS$ which are not open discs, the same result holds for $V$.

We continue the proof of lemma. Let $\cS$ be a sufficiently big finite triangulation of $W$ such that $\cS$ contains $S$ and $\cS\cap V_i$ is non-empty for each $i=1,\dots,t$. Denote by~$\cV$ the set of connected components of~$W\setminus S$ that meet~$\cS$. Note that, for each $V\in\cV$, $\cS\cap V$ is a triangulation of $V$. If we denote by $\Gamma_\cS$ the skeleton of $W$ with respect to $\cS$ and by $\Gamma_{\cS\cap V}$ the skeleton of $V$ with respect to $\cS\cap V$, then the connected components of $\Gamma_{\cS}\setminus S$ are the $\Gamma_{\cS\cap V}$, for $V \in \cV$. It now follows from Lemma \ref{lem:BM} that 
\[\sum_{V\in\cV} (1-g_t(V) - e^0(V))
= \pi_{0}(W) - g_t(W) - e^0(W) - \# S.\]

If $V \in \cV$ is not one of the~$V_{i}$'s, then $V$ is an open disc, hence $g_t(V)=0$ and $e^0(V)=1$. The result of the lemma follows.
\end{proof}

\subsection{Compatible skeleta and triangulations}

\begin{defn}\label{def:comp triang}
 Let $f:X\to Y$ be a finite morphism of quasi-smooth $k$-analytic curves. 
 We say that skeleta $\Gamma_1$ of $X$ and $\Gamma_2$ of $Y$ are {\em $f$-compatible} if $\Gamma_2=f(\Gamma_1)$ and $\Gamma_1=f^{-1}(\Gamma_2)$. Similarly, we say that triangulations $\cS$ of $X$ and $\cT$ of $Y$ are {\em $f$-compatible} (resp. {\em strictly $f$-compatible}) if $\cT=f(\cS)$ and $\cS=f^{-1}(\cT)$ (resp. $\cT=f(\cS)$ and $\cS=f^{-1}(\cT)$ and no two adjacent points in $\cS$ are mapped to the same point in $\cT$). 
 \end{defn}
 
 We note that if $\cS$ and $\cT$ are $f$-compatible triangulations of $X$ and $Y$, respectively, then the property that $\Gamma_{\cS}$ and $\Gamma_{\cT}$ are $f$-compatible is slightly finer than the strict $f$-compatibility of $\cS$ and $\cT$. For a trivial but illustrative example we may take $f:X\to Y$ to be a finite morphism of an open annulus to an open disc, and put $\cS=\cT=\emptyset$. 
  
 For a given finite morphism $f:X\to Y$ of quasi-smooth $k$-analytic curves there exist $f$-compatible skeleta as well as strictly $f$-compatible triangulations containing any given finite sets of type two and three points in~$X$ and~$Y$. This can be fairly easily deduced from the existence of triangulations of quasi-smooth $k$-analytic curves (see \cite[Corollary~4.26]{ABBR} or \cite[Section~3.5.11]{CTT14}). It is also related to the stable reduction of finite morphisms (for this aspect see \cite[Section 5]{ABBR} and references therein).
  
 Even more, if $Y$ and $X$ are finite curves, then we may choose $\cS$ and $\cT$ to be finite sets. These facts have many avatars in the literature depending whether we use formal models, triangulations or skeleta to capture the nice structure of quasi-smooth $k$-analytic curves. However, in this article, we are mainly interested in the following, certainly well-known, consequence. We agree that whenever we speak about compatible triangulations of finite curves, the triangulations will be assumed to be finite as well.
 
 \begin{corollary}\label{cor:compa partitions}
  Let $f:X\to Y$ be a finite morphism of finite curves. 
  \begin{itemize}
  \item[(a)] Let $\cS$ and $\cT$ be strictly $f$-compatible triangulations which induce $f$-compatible skeleta of $X$ and $Y$, respectively.   
  \begin{itemize}
   \item[(i)] Let $A$ be a connected component of $X\setminus \cS$ which is an open annulus. Then, $f(A)$ is a connected component in $Y\setminus\cT$ and it is an open annulus. If $A$ is not relatively compact in $X$, then neither is $f(A)$ in $Y$.
   \item[(ii)] Let $B$ be a connected component of $Y\setminus\cT$ which is an open annulus. Then $f^{-1}(B)$ is a disjoint union of connected components in $X\setminus\cS$, all of which are open annuli. If~$B$ is not relatively compact in $Y$, then neither is any of the components of $f^{-1}(B)$ in~$X$.
  \end{itemize}
  \item[(b)] Let $x\in X$ be a point and put $y:=f(x)$. Then, there exist open neighborhoods $U_x$ of $x$ in $X$ and $U_y$ of $y$ in $Y$ such that $f_{|U_x}:U_x\to U_y$ is a finite morphism and $y$ has only one preimage in $U_x$.
  \end{itemize}
 \end{corollary}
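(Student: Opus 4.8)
The plan is to exploit the two compatibility relations $\cS=f^{-1}(\cT)$ and $\Gamma_{\cS}=f^{-1}(\Gamma_{\cT})$, together with the fact that a finite morphism of $k$-analytic curves is proper (hence closed), and is finite over every affinoid, so that it is open and closed with finite fibres. For part (a)(i), since $A\cap\cS=\emptyset$ and $\cS=f^{-1}(\cT)$, the connected set $f(A)$ lies in $Y\setminus\cT$ and hence in a single component $B$; as $f^{-1}(\cT)=\cS$, the set $f^{-1}(B)$ is clopen in $X\setminus\cS$, thus a union of components of which $A$ is one, and because $f\colon f^{-1}(B)\to B$ is finite, open and closed, each component surjects, giving $f(A)=B$. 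That $B$ is an annulus follows from $\Gamma_{\cS}=f^{-1}(\Gamma_{\cT})$: the nonempty interval $\Gamma_A\subseteq\Gamma_\cS$ satisfies $\Gamma_A\cap\cS=\emptyset$, so $f(\Gamma_A)$ is a nonempty connected subset of $\Gamma_\cT\setminus\cT$, which lies in the skeleton of an annulus and not of a disc (whose skeleton is empty). For part (a)(ii) the same clopen argument exhibits $f^{-1}(B)$ as a disjoint union of components of $X\setminus\cS$, each mapping onto $B$; a disc component $A$ would force $f^{-1}(\Gamma_B)\cap A\subseteq\Gamma_A=\emptyset$, contradicting surjectivity onto the annulus $B$ whose skeleton $\Gamma_B$ is nonempty, so every component is an annulus. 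The non-relative-compactness statements follow from properness: in (i), $f^{-1}(\overline{B})$ is compact and contains $\overline{A}$; in (ii), $f$ closed yields $f(\overline{A})=\overline{B}$, so compactness of $\overline{A}$ would force that of $\overline{B}$.

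For part (b), the strategy is to separate the finite fibre topologically and then to localise over the \emph{interior} of an affinoid so as to retain genuine finiteness over open neighborhoods. Write $f^{-1}(y)=\{x=x_1,\dots,x_n\}$ and, using that the curve is Hausdorff, choose pairwise disjoint open neighborhoods $W_i\ni x_i$. Since $f$ is closed, $f(X\setminus\bigcup_i W_i)$ is closed and avoids $y$; I would then pick an affinoid neighborhood $V$ of $y$ contained in its complement with $y$ in the topological interior, and set $U_y:=\mathring{V}$. By construction $f^{-1}(U_y)\subseteq\bigcup_i W_i$, and $f^{-1}(U_y)\to U_y$ is finite, being the base change of the finite map $f^{-1}(V)\to V$ along the open immersion $U_y\hookrightarrow V$. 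Taking $U_x:=f^{-1}(U_y)\cap W_1$, which is clopen in $f^{-1}(U_y)$, the restriction $f|_{U_x}\colon U_x\to U_y$ is finite (over each affinoid in $U_y$ it is a direct factor of a finite module), and by construction $f^{-1}(y)\cap U_x=\{x\}$.

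The main obstacle I anticipate is in part (b): the naive isolation of $x$ inside the affinoid $f^{-1}(V)$ produces only a \emph{closed} neighborhood, whereas the statement demands \emph{open} neighborhoods over which the map is still finite. The device of passing to the interior $\mathring{V}$ and invoking stability of finiteness under base change along the open immersion $\mathring{V}\hookrightarrow V$ is precisely what reconciles openness with finiteness in the sense of Berkovich. In part (a) the only delicate point is excluding that an annulus maps to, or is covered by, a disc, which is exactly where the skeleton compatibility $\Gamma_{\cS}=f^{-1}(\Gamma_{\cT})$ is indispensable.
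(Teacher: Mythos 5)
Your argument is correct and follows essentially the same route as the paper: part (a) rules out discs via the compatibility of skeleta ($f(\Gamma_A)\subseteq\Gamma_{\cT}\setminus\cT$, resp.\ $f^{-1}(\Gamma_B)\subseteq\Gamma_{\cS}$) and handles relative compactness by continuity and properness of $f$, while part (b) separates the finite fibre by disjoint opens, uses closedness of $f$ to shrink a neighborhood of $y$ so that its preimage falls into those opens, and restricts to the clopen piece containing $x$. You spell out a few points the paper leaves implicit (why $f(A)$ is a full component, why the restriction to a clopen piece of $f^{-1}(U_y)$ remains finite), but the underlying strategy is identical.
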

\begin{proof}
 (a) If $A$ is an open annulus which is a connected component of $X\setminus\cS$, then it is mapped to a connected component of $Y\setminus\cT$, so $f(A)$ is either an open annulus either an open disc. But, since $A$ has a nonempty skeleton and $\Gamma_{\cS}$ and $\Gamma_{\cT}$ are $f$-compatible, $f(A)$ cannot be an open disc. 
 
 If $A$ is relatively compact, then $f(A)$ has to be relatively compact as well because $f$ is continuous. If $f(A)$ is relatively compact, then~$A$ is relatively compact because~$f$ is finite, hence proper.
 
 This proves assertions $(i)$ and $(ii)$.
 
 (b) Let~$V$ be a compact neighborhood of~$y$ in~$Y$. Then $W := f^{-1}(V)$ is a compact neighborhood of $F := f^{-1}(y)$ in~$X$. Since~$F$ is finite and~$X$ is Hausdorff, for each $z\in F$, there exists an open subset~$O_{z}$ of~$X$ such that $O_{z} \cap F = \{z\}$. Set $K := W \setminus \bigsqcup_{z\in F} O_{z}$. It is a compact subset of~$X$ and its image~$L$ is a compact subset of~$Y$ that does not contain~$y$. The complement of~$L$ in~$W$ contains an open neighborhood~$U_{y}$ of~$y$ in~$Y$. By construction, the connected component~$U_{x}$ of $f^{-1}(U_{y})$ containing~$x$ is contained in~$O_{x}$, hence $U_{x} \cap F = \{x\}$, which finishes the proof.
 \end{proof}

\subsubsection{Morphisms of boundary annuli} 

It follows from Corollary~\ref{cor:compa partitions} that, for every $v\in \cE^0(X)$, there exists $A_v$ such that $f_{|A_v}:A_v\to f(A_v)$ is a finite morphism of boundary annuli. Moreover, for each boundary annulus $A\subseteq A_v$, $f_{|A}:A\to f(A)$ is again a finite morphism of annuli (see Lemma~\ref{lem:annuli}).

We consider all the boundary annuli $A$ in $X$ with this property: the restriction $f_{|A}:A\to f(A)$ is a finite morphism of open annuli. We say that two such annuli belong to the same class if and only if one of them is a subset of the other. The set of all classes is denoted by $\cE^0_f(X)$. We note that $\#\cE^0_f(X) = \#\cE^0(X) = e^0(X)$.

\subsection{Ramification locus}

\begin{definition}\label{def:multiplicity}
Given a finite morphism $f:X\to Y$ of quasi-smooth $k$-analytic curves, we define the {\em multiplicity} of a point $x\in X$ as the number
\[
\mu_f(x):=\begin{cases}
	  e, \quad \text{$x$ of type $1$ and} \quad m_{f(x)}=m_x^e\sO_{f(x)},\\
          [\sH(x):\sH(f(x))],\quad \text{otherwise}.
         \end{cases}
\]
We say that $x\in X$ is a {\em ramified point} if $\mu_f(x)>1$.
\end{definition}

Let $f:X\to Y$ be a finite morphism of quasi-smooth $k$-analytic curves. 
The following properties of the multiplicity are not difficult to prove (see \cite[Section 6.3]{BerCoh}).

For every point $y\in Y$, we have the equality
\[\deg(f)=\sum_{x\in f^{-1}(y)}\mu_f(x).\]

The multiplicity of a point $x\in X$ is the maximal integer $n$ such that for every neighborhood $U$ of $x$ there exists a point $y\in Y$ such that $y$ has $n$ inverse images in $U$. In fact, one may choose $y$ to be a rational point in $Y$.

Finally, by choosing $U_x$ and $U_y$ as in Corollary \ref{cor:compa partitions} (b), we see that $x$ is a ramified point if and only if $f$ is not an isomorphism in a neighborhood of $x$, which is more in the spirit of the classical notion of ramification and ramified point.

\begin{rmk}
 The multiplicity of a point in the context of finite (\'etale) morphisms of Berkovich curves was first defined by V. Berkovich in \cite[Section~6.3]{BerCoh}, where it was named geometric ramification index. Some of its basic properties, together with the ones we listed above are stated in Remarks 6.3.1. in \lc.
\end{rmk}

A closely related function is the following.
\begin{definition}
 For a point $x\in X$, the {\em ramification indicator function} $\delta_m$ is defined as 
 \[
 \delta_m(x):=\begin{cases}
              0 \quad\text{if } \mu_f(x)=1;\\
              1\quad\text{otherwise}.
             \end{cases}
 \]
 More generally, for a $k$-analytic subset $U$ of $X$, such that $f_{|U}:U\to f(U)$ is a finite morphism, we define
 \[
 \delta_m(U):=\begin{cases}
              0 \quad\text{if } \deg(f_{|U})=1;\\
              1\quad\text{otherwise}.
             \end{cases}
 \]
\end{definition}

Note that, for $v\in \cE^0_f(X)$ and any open annulus $A_{v}$ in the class $v$, the degree of the morphism $f_{|A_v}$ only depends on $v$ and not on $A_{v}$, so it makes sense also to use the notation $\delta_m(v)$ for $\delta_m(A_v)$.

Finally, we define the main object of study of this paper.

\begin{definition}
Given a finite morphism $f:X\to Y$ of quasi-smooth $k$-analytic curves, we define the {\em ramification locus of $f$} to be the set $\cR_f:=\{x\in X\mid \mu_f(x)>1\}$. 

If $U$ is a $k$-analytic subset of $X$ such that $f_{|U}:U\to f(U)$ is a finite morphism, we define $r_f(U)$ to be the cardinal of the set of connected components of the ramification locus $\cR_{f_{|U}}=U\cap \cR_f$. Also, we denote by $r'_f(U)$ the number of the connected components of $\cR_{f_{|U}}=U\cap \cR_f$ which are relatively compact in $U$. 
\end{definition}

\begin{remark}
Note that we have $r_{f}(U) \le r'_f(U) + \sum_{v\in \cE^0(U)}\delta_m(v)$.
\end{remark}

\section{Number of components of the ramification locus}

\subsection{Infinite number of components of the ramification locus}\label{sec:nonfinite} 

The next example shows that the number of components of the ramification locus may be infinite if we do not restrict ourselves to the class of finite curves. 

Our strategy is the following. We start with a curve and a generically \'etale cover with non-empty ramification locus. We can arrange that the original curve has (at least) two unramified open ends. We then take countably many copies of it and construct a new curve by glueing one unramified end of one copy to the other unramified end of the next one. The cover naturally extends to the new curve and its ramification locus has infinitely many connected components.

Assume for simplicity that the characteristic of the residue field $\tilde{k}$ is different from 2 (we invite the reader to construct a similar example if $\char(\tilde{k})=2$) and let $f':A'\to B'$ be a Kummer map of degree~2 between closed annuli. Due to our assumptions on the characteristic of the residue field, the ramification locus of the morphism $f'$ is the skeleton~$\Gamma$ of~$A'$, so that for every open disc $D$ in $A'$, the restriction $f'_{|D}$ is an isomorphism.

Let~$D^1$ and~$D^2$ be two distinct connected components of $B' \setminus \Gamma_{B'}$. They are isomorphic to the open unit disc. Let~$D^1_{c}$ and~$D^2_{c}$ be closed subdiscs of~$D^1$ and~$D^2$ respectively such that the annuli $C^1:=D^1\setminus D^1_{c}$ and $C^2:=D^2\setminus D^2_{c}$ are isomorphic. Note that~$f'$ is a local isomorphism above~$C^1$ and~$C^2$. 

Let $E^{1,+}$ and $E^{1,-}$ (resp. $E^{2,+}$ and~$E^{2,-}$) be the connected components of~$f'^{-1}(C^1)$ (resp. $f'^{-1}(C^2)$). They are annuli. For each $j \in\{1,2\}$ and each $\sigma\in\{-,+\}$, $f'$ induces an isomorphism $\varphi^{j,\sigma} : E^{j,\sigma} \xrightarrow[]{\sim} C^j$.

We set $B := B' \setminus(D^1_{c}\cup D^2_c)$ and $A:=f'^{-1}(B)$. We denote by $f : A\to B$ the finite morphism induced by~$f'$. We note that the ramification locus of~$f$ is still $\Gamma$. The reader may refer to Figure 1 for a visualization of the morphism $f$ (The ramification locus $\Gamma$ is represented with a thicker line).
\begin{center}
  \begin{figure}
  \begin{overpic}[width=0.45\textwidth,tics=10]{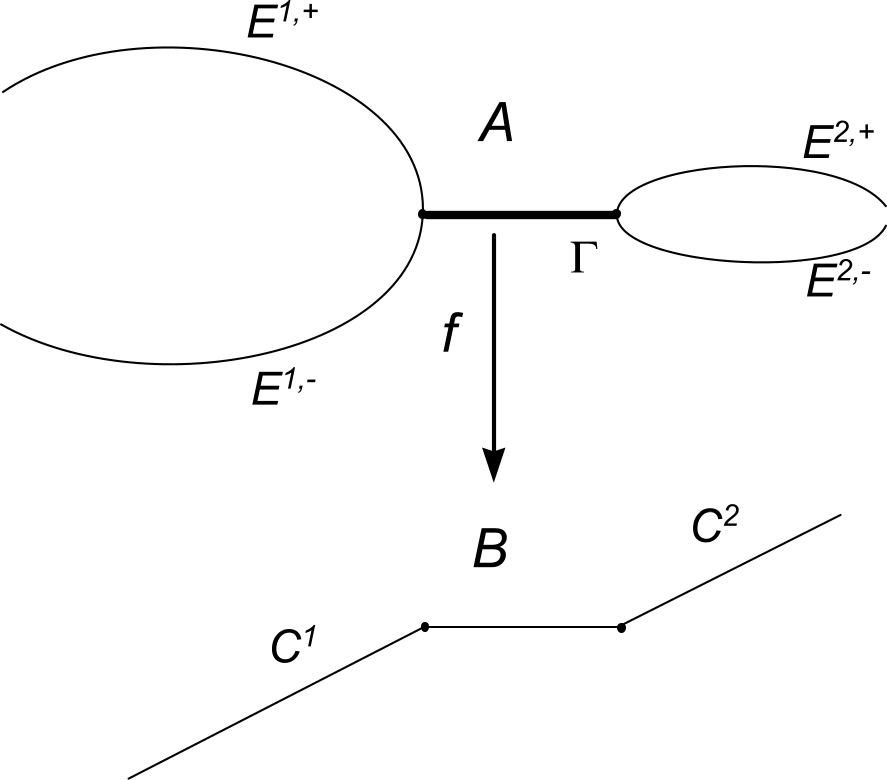}
  \end{overpic}
  \caption{Skeletal representation of the morphism $f:A\to B$.}
  \end{figure}
 \end{center}

For each $i\in \N$, we now consider a copy of the previous construction. We denote the corresponding objects with a subscript~$i$: $A_{i}$, $B_{i}$, $\Gamma_{i}$, $f_{i}$, etc. We also choose an isomorphism $\gamma_{i} : C_{i+1}^1 \xrightarrow[]{\sim} C^2_{i}$ such that the image of a sequence in~$D_{i+1}^1$ approaching the skeleton of~$B_{i+1}$ goes away from the skeleton of~$B_{i}$. 

We now construct a quasi-smooth $k$-analytic curve~$Y$ by identifying, for each $i\in \N$, the annulus~$C_{i+1}^1$ in~$A_{i+1}$ with the annulus~$C_{i}^2$ in~$A_{i}$ \textit{via} $\gamma_{i}$. Similarly, we construct a quasi-smooth $k$-analytic curve~$X$ by identifying, for each $i\in \N$, the annulus~$E_{i+1}^{1,+}$ in~$B_{i+1}$ with the annulus~$E_{i}^{2,+}$ in~$B_{i}$ \textit{via} $\beta_i^+:=(\varphi_{i}^{2,+})^{-1} \circ \gamma_{i} \circ \varphi_{i+1}^{1,+}$ and the annulus~$E_{i+1}^{1,-}$ in~$B_{i+1}$ with the annulus~$E_{i}^{2,-}$ in~$B_{i}$ \textit{via} $\beta_i^-:=(\varphi_{i}^{2,-})^{-1} \circ \gamma_{i} \circ \varphi_{i+1}^{1,-}$. The gluing process is represented in Figure 2.

\begin{center}
  \begin{figure}[h]
  \begin{overpic}[width=0.7\textwidth,tics=10]{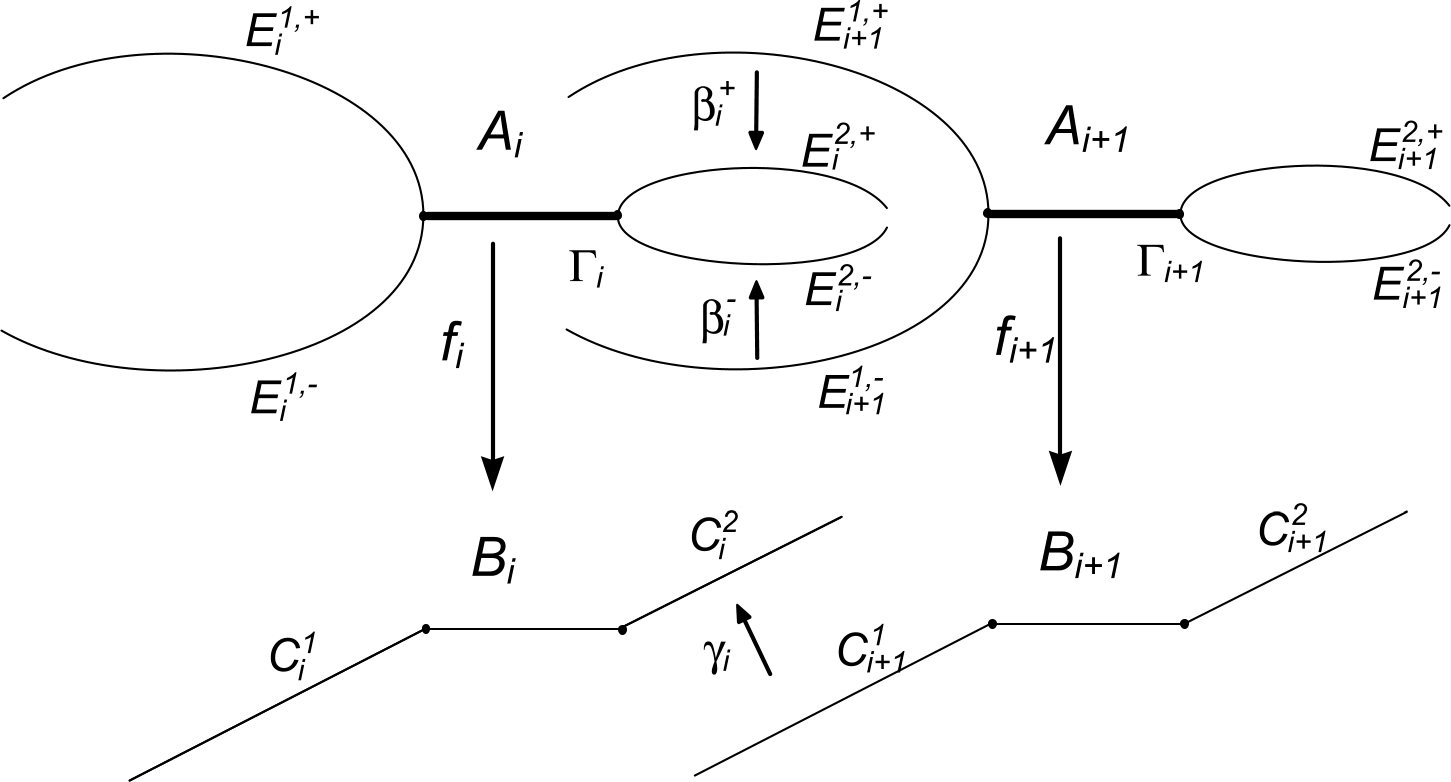}
  \end{overpic}
  \caption{Gluing of the morphisms $f_i:A_i\to B_i$ and $f_{i+1}:A_{i+1}\to B_{i+1}$.}
  \end{figure}
 \end{center}

By construction, the family of morphisms~$(f_{i})_{i\in\N}$ induces a finite morphism $\vphi : X \to Y$ (Figure 3). The ramification locus of the latter has infinitely many connected components, each of them being the image of some~$\Gamma_{i}$ by the embedding $A_{i} \hookrightarrow X$. Note that these images do not meet. Also remark that the curve~$X$ is not finite since it contains infinitely many ``loops''.

\begin{center}
  \begin{figure}[h]
  \begin{overpic}[width=0.7\textwidth,tics=10]{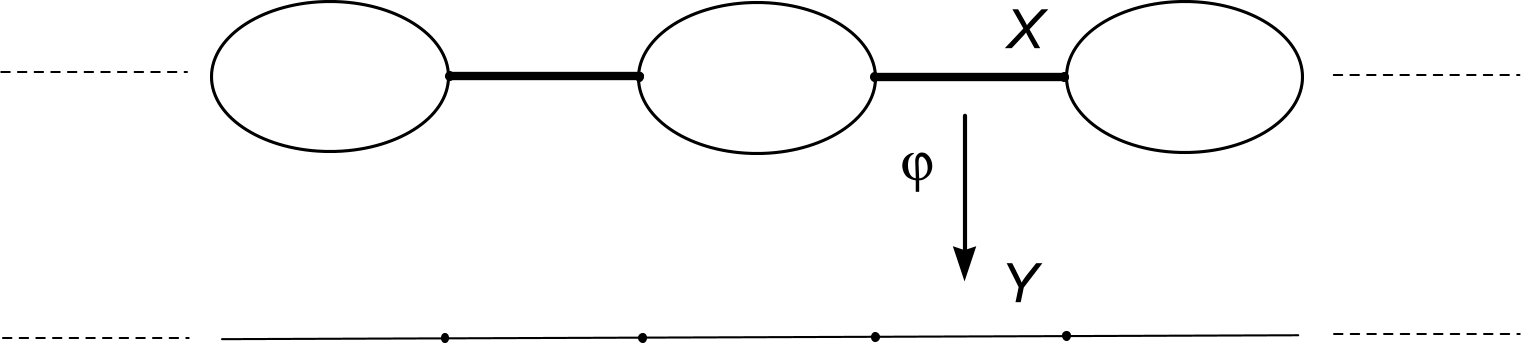}
  \end{overpic}
  \caption{Induced morphism (after gluing) $\vphi:X\to Y$.}
  \end{figure}
 \end{center}

\smallbreak

One may also construct an example of a finite morphism with infinitely many connected components in the ramification locus by considering some cover of the open unit disc $D(0,1^-)$, with coordinate~$T$, of the form $S^2 = F(T)$ for some well chosen $F \in \sO(D(0,1^-))$. We refer to \cite[Exercise~6.1.3.5]{TemkinIntroduction} for some more details.

\subsection{Morphisms of discs and annuli}

\begin{lemma}\label{lemma:morphisms of discs}
 Let $f:D_1\to D_2$ be a finite morphism of open discs. Then, for every open (resp. closed) disc $D'\subset D_1$, ~$f_{|D'}:D'\to f(D')$ is a finite morphism of open (resp. closed) discs. For every open (resp. closed) disc $D''\subset D_2$, ~$f^{-1}(D'')$ is a disjoint union of open (resp. closed) discs.
\end{lemma}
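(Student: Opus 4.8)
The plan is to deduce both statements from a single \emph{monotonicity} property of $f$, after which everything becomes formal bookkeeping on the tree structure of the discs. Since $k$ is algebraically closed I fix isomorphisms $D_1\cong D(0,1^-)$ and $D_2\cong D(0,1^-)$ with coordinates $T$ and $S$, and let $\varphi\in\sO(D_1)$ be the image of $S$. Each $D_i$ is uniquely arcwise connected with a single open end $e_i$; I root it at $e_i$ and write $a\preceq b$ when $a$ lies on the path from $b$ to $e_i$, so that for points of type $2$ or $3$ one has $a\preceq b$ exactly when the closed subdisc $\bar D_a$ lying below $a$ contains $\bar D_b$. The key step is to prove
\[
a\preceq b \ \Longrightarrow\ f(a)\preceq f(b),
\qquad\text{together with}\qquad f(e_1)=e_2 .
\]
The second assertion is just properness ($f$ is finite, so it sends nets escaping every compact of $D_1$ to nets escaping every compact of $D_2$, and $e_2$ is the only end). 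For the first, I use that on $D(0,1^-)$ one has $w\preceq w'$ if and only if $|(S-c)(w)|\ge|(S-c)(w')|$ for every $c\in k$ with $|c|<1$ (this merely encodes $\bar D_w\supseteq\bar D_{w'}$); applying it to $f(a),f(b)$ and using $|(S-c)(f(z))|=|(\varphi-c)(z)|$ reduces the claim to $|(\varphi-c)(a)|\ge|(\varphi-c)(b)|$, which holds because $|(\varphi-c)(a)|=\sup_{\bar D_a}|\varphi-c|\ge\sup_{\bar D_b}|\varphi-c|=|(\varphi-c)(b)|$ by monotonicity of the sup-norm over the nested discs $\bar D_a\supseteq\bar D_b$. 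I expect this ``no folding'' statement to be the main obstacle to phrase cleanly; the rest is formal.

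Next I treat preimages. A proper closed subdisc of $D_2$ is $\bar D_y=\{w\succeq y\}$ for its Shilov point $y$, and a proper open one is $\{w\succ y_0\}$ for its boundary point $y_0\in D_2$; let $x_1,\dots,x_m$ be the points of the (finite, type $2$ or $3$) fibre $f^{-1}(y)$, resp.\ $f^{-1}(y_0)$. I claim
\[
f^{-1}(\bar D_y)=\bigsqcup_{i=1}^m \{z\succeq x_i\},
\qquad
f^{-1}(\{w\succ y_0\})=\bigsqcup_{i=1}^m \{z\succ x_i\},
\]
which are disjoint unions of closed, resp.\ open, subdiscs. The inclusion $\supseteq$ is immediate from monotonicity, the strict version using that $f$ cannot be constant on an arc since its fibres are finite. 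For $\subseteq$, if $f(z)\succeq y$ then the image of the path $[z,e_1)$ is connected, contains $f(z)$, and accumulates at $e_2$, hence meets $\{y\}$; so some $x_i$ lies on $[z,e_1)$ and $z\succeq x_i$. Disjointness holds because no two $x_i$ are $\preceq$-comparable: if $x_i\prec x_j$, monotonicity would force $f$ to be constant equal to $y$ on the whole arc $[x_i,x_j]$, contradicting finiteness of the fibre.

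Finally I obtain the images and finiteness. For a proper closed subdisc $D'=\bar D_{x'}\subseteq D_1$ put $y':=f(x')$; then $x'$ is one of the points of the fibre $f^{-1}(y')$, and $D'$ is precisely the corresponding connected component of $f^{-1}(\bar D_{y'})$ found above. Hence $f_{|D'}$ is the restriction to one connected component of the morphism $f^{-1}(\bar D_{y'})\to\bar D_{y'}$, which is finite because finiteness is preserved under the base change along the affinoid subdomain $\bar D_{y'}\hookrightarrow D_2$; a connected component is open and closed in this (compact) preimage, so $f_{|D'}$ is finite. As $f_{|D'}$ is non-constant and finite onto the connected curve $\bar D_{y'}$, its image is open and closed, so $f(D')=\bar D_{y'}$ is a closed disc. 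The open case is identical, replacing $\bar D_{x'}$, $\bar D_{y'}$ by $\{z\succ x'\}$, $\{w\succ y'\}$ and invoking the open-disc computation of the fibre. The boundary cases $D'=D_1$ and $D''=D_2$ are trivial since $f$ is surjective.
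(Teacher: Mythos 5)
The paper itself gives no details here (it simply appeals to ``valuation polygons in suitable coordinates''), so your attempt to actually supply a proof is welcome, and the backbone of it is sound: the monotonicity statement $a\preceq b\Rightarrow f(a)\preceq f(b)$, proved via $|(\varphi-c)(a)|=\sup_{\bar D_a}|\varphi-c|\ge \sup_{\bar D_b}|\varphi-c|$, is exactly the tree-theoretic content of the valuation-polygon argument, and your treatment of \emph{closed} subdiscs (fibre description, disjointness via finiteness of fibres, finiteness by restriction to a clopen component over an affinoid domain, surjectivity onto the connected target) is correct.

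There is, however, a genuine gap in the \emph{open}-disc half of both assertions: you identify a proper open subdisc of $D_2$ with boundary point $y_0$ with the set $\{w\succ y_0\}$. This is false whenever $y_0$ is of type~$2$ (i.e.\ the radius lies in $|k^*|$, which is the generic case): $\{w\succ y_0\}=\bar D_{y_0}\setminus\{y_0\}$ is then the disjoint union of \emph{infinitely many} residue discs of $y_0$, and the given open subdisc $D''$ is only one of these connected components. The same misidentification occurs on the source side, where an open subdisc $D'$ attached to $x'$ is only one component of $\{z\succ x'\}$. Consequently your displayed formula computes $f^{-1}(\bar D_{y_0}\setminus\{y_0\})$ rather than $f^{-1}(D'')$, and your image argument computes $f(\{z\succ x'\})$ rather than $f(D')$; neither open-disc claim of the lemma is actually established as written. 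The gap is repairable within your framework: from the (correct) closed-disc fibre description one gets $f^{-1}(\{w\succ y_0\})=\bigsqcup_i(\bar D_{x_i}\setminus\{x_i\})$, whose connected components are open residue discs; since $D''$ is a connected component of $\{w\succ y_0\}$, its preimage is a union of connected components of that set, hence a disjoint union of open discs. Dually, for an open subdisc $D'$ which is a residue class at $x'$, strict monotonicity places $f(D')$ inside a single residue class $D''$ at $y'=f(x')$, $D'$ is then a connected component of $f^{-1}(D'')$, and your finiteness-plus-open-and-closed-image argument applies to give $f(D')=D''$. You should add this component-by-component step; without it the statement you prove is strictly weaker than the lemma.
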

\begin{proof}
 This is a standard fact proved using valuation polygon of the function $f$ expressed in suitable coordinates on $D_1$ and $D_2$, hence we omit the details.
 \end{proof}
 
 \begin{lemma}\label{lem:type4}
 Let $f: D_{1} \to D_{2}$ be a morphism of open or closed discs. Let~$z\in D_{1}$ be a point of type~4. Then, there exists a closed disc~$D$ in~$D_{1}$ containing~$z$ such that the map~$\mu_{f}$ is constant along the interval joining~$z$ to the boundary of~$D$.
 \end{lemma}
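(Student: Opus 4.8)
The plan is to reduce this to a statement about the structure of the morphism near a type~4 point and then exploit the fact that type~4 points sit at the ends of intervals that look, locally, like skeleta of very thin annuli. First I would recall that a point~$z$ of type~4 in an open or closed disc~$D_1$ has a basis of neighborhoods consisting of open discs, and that the multiplicity function $\mu_f$ varies in a controlled, piecewise-constant (in fact piecewise-linear, in the appropriate metric) manner along intervals in the disc. The point of the statement is that, although $\mu_f$ could in principle oscillate, it cannot do so infinitely often in every neighborhood of~$z$: there is a genuine closed disc $D\subset D_1$ containing~$z$ along whose radius (the interval from~$z$ to the boundary point of~$D$) the value $\mu_f$ is a single constant.

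The key steps I would carry out are as follows. Choose a point~$z_0$ of type~2 or~3 close to~$z$ and consider the interval $I$ joining~$z$ to~$z_0$; this interval retracts into the skeleton structure, and the closed sub-disc~$D$ we seek will be the one whose Shilov point is a suitable point of this interval. Next I would invoke the local description of finite morphisms of discs: in suitable coordinates~$T$ on~$D_1$ and~$S$ on~$D_2$, the morphism is given by a power series~$F$, and the multiplicity at a point of the interval~$I$ is read off from the valuation polygon (Newton polygon) of~$F$, exactly as in the proof of Lemma~\ref{lemma:morphisms of discs}. The valuation of~$F$ along the interval toward~$z$ is governed by finitely many slopes of this polygon, so the multiplicity $\mu_f$, being determined by which slope dominates, is piecewise constant with \emph{finitely many} breakpoints along~$I$. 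Therefore there is a final sub-interval, abutting~$z$, on which $\mu_f$ is constant; I would then take~$D$ to be the closed disc cut out by the type~2 or~3 point at the far end of this final sub-interval, so that the radius from~$z$ to $\partial D$ lies entirely within the region of constancy.

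The main obstacle I anticipate is the delicate behavior at the type~4 point itself, where there is no residue field extension datum of the naive sort and the usual coordinate arguments have to be handled with care: one must ensure that the multiplicity at~$z$ agrees with its (constant) value on the approaching interval, i.e.\ that $\mu_f$ is continuous as one runs into the type~4 endpoint, rather than jumping there. This is where the fact that $f$ is finite (hence proper, and with image of~$z$ again of type~4 by the standard classification-preserving properties of finite morphisms of curves) is essential: properness guarantees that the germ of~$f$ at~$z$ is controlled by the germ along the interval, so no pathological jump can occur exactly at~$z$. Once this continuity at the endpoint is secured, the finiteness of the breakpoints of the valuation polygon does the rest, and the closed disc~$D$ is produced as described.
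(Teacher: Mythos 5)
Your proposal correctly isolates the crux --- showing that $\mu_f$ does not jump at the type~4 endpoint itself --- but it does not actually prove it. Saying that ``properness guarantees that the germ of $f$ at $z$ is controlled by the germ along the interval, so no pathological jump can occur'' is a restatement of what needs to be shown, not an argument: $\mu_f(z)$ is defined as $[\sH(z):\sH(f(z))]$, and nothing in your sketch connects this quantity to the (allegedly eventually constant) values of $\mu_f$ at the type~2 and~3 points approaching~$z$. The paper closes exactly this gap with Corollary~\ref{cor:compa partitions}~(b): choose a neighborhood $U_z$ of~$z$ on which $f$ restricts to a finite morphism and in which $z$ is the \emph{only} preimage of $f(z)$. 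Then for any closed disc $D\subseteq U_z$ containing~$z$ and any type~2 or~3 point~$t$ on the interval from~$z$ to the boundary of~$D$, the subdisc~$D_t$ with Shilov point~$t$ satisfies both $f^{-1}_{|D_t}(f(t))=\{t\}$ (by Lemma~\ref{lemma:morphisms of discs}) and $f^{-1}_{|D_t}(f(z))=\{z\}$ (since $D_t\subseteq U_z$), whence $\mu_f(t)=\deg(f_{|D_t})=\mu_f(z)$ by the properties of the multiplicity stated after Definition~\ref{def:multiplicity}. This gives constancy on the \emph{whole} interval inside~$U_z$ in one stroke, so the entire ``finitely many breakpoints, hence eventually constant'' layer of your argument is unnecessary.

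That layer is also shakier than you suggest. The valuation polygon of~$F$ in a coordinate~$T$ has its breakpoints at the absolute values of the roots of~$F$ (or of $F-c$) relative to a fixed $k$-rational center, and it controls~$\mu_f$ along paths from that center to the Gauss point. A type~4 point has no $k$-rational center: the interval from~$z$ to the boundary of~$D_1$ is traversed by Shilov points $\eta_{a,r}$ whose centers~$a$ must keep changing as~$r$ decreases, so a single Newton polygon does not parametrize this interval and ``finitely many slopes'' is not directly available. If you insist on an eventual-constancy argument, the clean route is monotonicity --- $\mu_f$ is nondecreasing toward the boundary (the content of Lemma~\ref{lemma:disc ramification}) and integer-valued $\geq 1$, hence eventually constant near~$z$ --- but even then you would still owe the identification of that constant with $\mu_f(z)$, which is precisely the point addressed above.
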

 \begin{proof}
 By Corollary \ref{cor:compa partitions} (b), there are open neighborhoods $U_z$ of $z$ in $D_{1}$ and $U_{f(z)}$ of $f(z)$ in $D_{2}$ such that $f_{|U_z}:U_z\to U_{f(z)}$ is a finite morphism and $f(z)$ has only one preimage in $U_z$. 
 
Let~$D$ be closed disc in~$U_{z}$ containing~$z$. Let~$t$ be a point of type~2 or~3 on the interval joining~$z$ to the boundary of~$D$. There exists a unique disc~$D_{t}$ in~$D_{z}$ that contains~$z$ and whose Shilov boundary is reduced to~$t$. By Lemma~\ref{lemma:morphisms of discs}, the induced morphism $f_{|D_{t}} : D_{t} \to f(D_{t})$ is a morphism of closed discs. It follows that $f^{-1}_{|D_{t}}(f_{|D_{t}}(t)) = \{t\}$. Note that, by choice of~$D$, we also have $f^{-1}_{|D_{t}}(f_{|D_{t}}(z)) = \{z\}$. Using the relationship between the multiplicity of a function at a point and its degree stated after Definition \ref{def:multiplicity}, we deduce that 
\[\mu_f(t)=\deg(f_{|D_{t}})=\mu_{f}(z).\]
 \end{proof}

\begin{lemma}\label{lemma:disc ramification}
Let $f: D_{1} \to D_{2}$ be a morphism of open or closed discs. For each $z\in D_{1}$, the map~$\mu_{f}$ is nondecreasing along the interval joining~$z$ to the boundary of~$D_{1}$.

In particular, if $\deg(f)>1$, then $\cR_{f}(D_{1})$ is nonempty and connected.
\end{lemma}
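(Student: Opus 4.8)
The plan is to reduce the monotonicity statement to the behaviour of $\mu_f$ at points of type~$2$ and~$3$, where it can be read off from the degrees of restrictions of $f$ to subdiscs, and then to deduce both parts of the ``in particular'' clause from the tree structure of a disc. First I would fix $z\in D_1$ and describe the interval $I$ joining $z$ to the boundary of~$D_1$: apart from its endpoint~$z$, all of its points are of type~$2$ or~$3$. For such a point~$t$, let $D_t$ be the unique closed subdisc of~$D_1$ whose Shilov boundary is~$\{t\}$ and which contains~$z$; as $t$ moves from $z$ towards the boundary the discs~$D_t$ increase, and $z\in D_t$ for all~$t$. By Lemma~\ref{lemma:morphisms of discs}, each $f_{|D_t}\colon D_t\to f(D_t)$ is a finite morphism of closed discs, $f(t)$ is the Shilov boundary point of $f(D_t)$, and $f_{|D_t}^{-1}(f(t))=\{t\}$. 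Using the degree formula recalled after Definition~\ref{def:multiplicity} (and the fact that the multiplicity is a local invariant, so unaffected by restriction), this gives the key identity
\[\mu_f(t)=\deg\big(f_{|D_t}\big).\]

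Next I would prove monotonicity on type $2/3$ points. If $t,t'\in I$ with $t$ closer to~$z$, then $D_t\subseteq D_{t'}$, hence $f(t)\in f(D_t)\subseteq f(D_{t'})$ and $f_{|D_{t'}}^{-1}(f(t))\supseteq f_{|D_t}^{-1}(f(t))=\{t\}$. Applying the degree formula to $f_{|D_{t'}}$ at the point~$f(t)$ yields
\[\deg\big(f_{|D_{t'}}\big)=\sum_{x\in f_{|D_{t'}}^{-1}(f(t))}\mu_f(x)\ \ge\ \mu_f(t)=\deg\big(f_{|D_t}\big),\]
so that $\mu_f(t)\le\mu_f(t')$. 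To include the endpoint~$z$ itself, I would invoke Lemma~\ref{lem:type4} when $z$ is of type~$4$ (so that $\mu_f$ is even constant on an initial segment of~$I$), treat the type~$1$ case via the local description of $f$ as $w\mapsto w^{\mu_f(z)}$ near~$z$, and note that there is nothing to prove when $z$ is of type~$2$ or~$3$. This establishes that $\mu_f$ is nondecreasing along~$I$.

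For the second assertion, I first observe that $\cR_{f}(D_1)$ is nonempty: choosing any point $y\in D_2$ and a type $2/3$ point~$t$ close enough to the boundary that $D_t\supseteq f^{-1}(y)$ and $y\in f(D_t)$, one gets $\mu_f(t)=\deg(f_{|D_t})=\deg(f)>1$, so $t\in\cR_f$. For connectedness I would show that $\cR_{f}(D_1)$ is convex for the (tree) arcwise structure of~$D_1$: given $x_1,x_2\in\cR_f$, let $w$ be the point at which the paths from $x_1$ and from $x_2$ to the boundary first meet, so that $[x_1,x_2]=[x_1,w]\cup[w,x_2]$ with each $[x_i,w]$ contained in the interval from $x_i$ to the boundary. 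By the monotonicity just proved, $\mu_f\ge\mu_f(x_i)>1$ on $[x_i,w]$, whence $[x_1,x_2]\subseteq\cR_f$. Thus $\cR_{f}(D_1)$ is convex, hence connected.

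The main obstacle is the careful justification underlying the key identity $\mu_f(t)=\deg(f_{|D_t})$, namely that for a morphism of closed discs the preimage of the Shilov boundary point reduces to the Shilov boundary point (already used in the proof of Lemma~\ref{lem:type4}); once this and the elementary monotonicity of degrees under subdisc inclusion are in hand, the rest is a formal consequence of the tree structure. A minor additional point to handle with care is the behaviour at the endpoint~$z$ when it is of type~$1$ or~$4$, which is precisely what Lemma~\ref{lem:type4} and the local normal form of~$f$ are meant to cover.
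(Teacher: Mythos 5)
Your proposal is correct and follows essentially the same route as the paper: the identity $\mu_f(t)=\deg(f_{|D_t})$ for the subdisc $D_t$ with Shilov point $t$, the degree-sum formula to compare multiplicities of nested subdiscs, and connectedness of $\cR_f(D_1)$ via the intervals joining ramified points to the boundary. The only cosmetic difference is your separate treatment of the endpoint $z$ (via Lemma~\ref{lem:type4} and a normal form at type~1 points), which is unnecessary since the inequality $\mu_f(t_1)\le\deg(f_{|D_t})$ already applies to any $t_1\in D_t$ regardless of its type.
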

\begin{proof}
 We assume that the discs are closed, the other case reducing to this one by exhausting open discs by closed discs and using Lemma~\ref{lemma:morphisms of discs}.
 
 For the first part of the statement, let $t\in D_1$ be a point which is on the interval joining~$z$ to the boundary of $D_1$ and let $t_1$ be any point on the interval joining $z$ with $t$. Let $D_t$ be the closed subdisc in $D_1$ whose Shilov point is $t$. We note that $f_{|D_t}:D_t\to f(D_t)$ is a finite morphism of closed discs and that $f_{|D_t}^{-1}(f(t))=\{t\}$. Furthermore, $z$ and~$t_1$ belong to~$D_t$.  Using the properties of the multiplicity function, we obtain
 \[
 \mu_{f}(t)=\mu_{f_{|D_t}}(t)=\deg(f_{|D_t})=\sum_{x\in f^{-1}_{|D_t}(f_{|D_t}(t_1))}\mu_{f_{|D_t}}(x)\geq \mu_{f_{|D_t}}(t_1)=\mu_{f}(t_1).
 \]
 
  \smallbreak
  
 Let us prove the last part of the statement. Assume that~$\deg(f)>1$. Denoting by~$\eta_{D_{1}}$ the Shilov boundary of~$D_{1}$, the previous discussion shows that we have
\[\mu_{f}(\eta_{D_{1}}) = \deg(f),\]
which ensures that $\cR_{f}(D_{1})$ is nonempty. 

Let $z,z' \in \cR_{f}(D_{1})$. Denote by $I_{z}$ (resp. $I_{z'}$) the interval joining~$z$ (resp.~$z'$) to~$\eta_{D_{1}}$. Since $\mu_{f}(z)>1$, the first part of the lemma shows that~$I_{z}$ belongs to~$\cR_{f}(D_{1})$, and similarly for~$I_{z'}$. The result now follows from the fact that $I_{z}$ and~$I_{z'}$ meet.
\end{proof}

\begin{corollary}\label{cor:extending disc ramification}
 Let $f:X\to Y$ be a finite morphism of quasi-smooth $k$-analytic curves and let $D$ be an open disc in $X$ attached to a point $x\in X$ and such that $f_{|D}:D\to f(D)$ is a finite morphism of open discs. If $\deg(f_{|D})>1$, then $x$ is a ramified point. In particular, $x$ and $\cR_f(D)$ belong to the same connected component of the ramification locus $\cR_f(X)$.
\end{corollary}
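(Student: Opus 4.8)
The plan is to establish the two assertions in turn: first that $x$ lies in the ramification locus, i.e.\ that $\mu_f(x)>1$, and then that $x$ is connected to $\cR_f(D)$ inside $\cR_f(X)$. The whole difficulty is concentrated in the first assertion, since the boundary point $x$ does \emph{not} belong to $D$ and one must transfer the degree information from the open disc to its boundary.

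For the first assertion I would relate $\mu_f(x)$ to $\deg(f_{|D})$ by passing through closed subdiscs. Fix a point $z_0\in D$ and consider the interval $I$ joining $z_0$ to the boundary point $x$; for a type~2 or~3 point $t\in I$, write $D_t$ for the closed subdisc of $D$ whose Shilov point is $t$. By Corollary~\ref{cor:compa partitions}(b) I would choose neighbourhoods $U_x$ of $x$ and $U_y$ of $y:=f(x)$ with $f_{|U_x}\colon U_x\to U_y$ finite and $x$ the only preimage of $y$ in $U_x$, so that $\deg(f_{|U_x})=\mu_f(x)$. The key claim is that $\deg(f_{|D_t})=\deg(f_{|D})$ for $t$ close enough to $x$: choosing a point $w\in f(D)$, its finitely many preimages in $D$ all lie in some $D_{t_0}$, and for $t$ between $t_0$ and $x$ these are exactly the preimages of $w$ in $D_t$, so the degree formula gives the equality. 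Exactly as in the proof of Lemma~\ref{lemma:disc ramification}, the fact that $f_{|D_t}$ is a morphism of closed discs with $f_{|D_t}^{-1}(f(t))=\{t\}$ yields $\mu_f(t)=\deg(f_{|D_t})$. Finally, for $t$ close to $x$ one has $t\in U_x$, and since $t$ is then a preimage of $f(t)$ in $U_x$,
\[
\mu_f(x)=\deg(f_{|U_x})\ge \mu_f(t)=\deg(f_{|D_t})=\deg(f_{|D})>1,
\]
so $x$ is ramified.

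For the second assertion, note first that since $\deg(f_{|D})>1$, Lemma~\ref{lemma:disc ramification} guarantees that $\cR_f(D)$ is nonempty and connected. I would pick any $z\in\cR_f(D)$ and look at the interval $[z,x)$ inside $D$ joining $z$ to the boundary point $x$. Because multiplicity is local and $D$ is open, $\mu_f$ and $\mu_{f_{|D}}$ agree on $D$, so the monotonicity part of Lemma~\ref{lemma:disc ramification} shows that $\mu_f$ is nondecreasing along $[z,x)$; as $\mu_f(z)>1$, this gives $\mu_f>1$ on all of $[z,x)$. Together with $\mu_f(x)>1$ from the first part, the closed interval $[z,x]$ lies in $\cR_f(X)$. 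Since $[z,x]$ is connected and meets the connected set $\cR_f(D)$ at $z$, the union $\cR_f(D)\cup[z,x]$ is a connected subset of $\cR_f(X)$ containing both $x$ and all of $\cR_f(D)$, which is exactly the claim.

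The main obstacle is the boxed inequality in the first paragraph: transferring the degree of $f$ on the open disc $D$ to the multiplicity at the boundary point $x\notin D$. The care needed is to work inside a neighbourhood $U_x$ where $x$ is the unique preimage of $y$ (giving $\deg(f_{|U_x})=\mu_f(x)$) and to use the exhausting closed subdiscs $D_t$ with $t\to x$ as a bridge, exploiting both that their degree stabilizes to $\deg(f_{|D})$ and that each $D_t$ has $t$ as the unique preimage of its Shilov point. Once this comparison is in place, the connectedness statement follows routinely from the monotonicity of multiplicity along intervals inside a disc.
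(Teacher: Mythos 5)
Your proof is correct and follows essentially the same route as the paper's: both hinge on Corollary~\ref{cor:compa partitions}(b) to identify $\mu_f(x)$ with $\deg(f_{|U_x})$ and then bound this below by $\deg(f_{|D})$, and both invoke Lemma~\ref{lemma:disc ramification} for the connectedness claim. The only differences are cosmetic --- you justify the inequality $\deg(f_{|U_x})\ge\deg(f_{|D})$ in detail via exhausting closed subdiscs where the paper states it tersely, and you connect $x$ to $\cR_f(D)$ by an explicit interval rather than by a closure argument.
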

\begin{proof}
 Let us put $y:=f(x)$ and let $U_x$ and $U_y$ be open neighborhoods of $x$ and $y$, respectively, which satisfy the conditions of Corollary \ref{cor:compa partitions} (b). Under this assumption, we have
 \[\mu_{f}(x) = \deg(f_{|U_{x}}).\]
 Since~$U_{x}$ meets~$D$, we have $\deg(f_{|U_{x}}) \ge \deg(f_{|D}) >1$, hence~$x$ is ramified. 
 
 Moreover, Lemma~\ref{lemma:disc ramification} ensures that $\cR_{f}(D)$ is connected and meets~$U_x$. Since~$U_{x}$ may be chosen as small as we want, it follows that~$x$ belongs to the closure of~$\cR_{f}(D)$, hence to the same connected component of the ramification locus.
\end{proof}

\begin{lemma}\label{lem:annuli}
Let $f:A_1\to A_2$ be a finite morphism of open or closed annuli. Then, we have $f(\Gamma_{A_{1}}) = \Gamma_{A_{2}}$, $f^{-1}(\Gamma_{A_{2}}) = \Gamma_{A_{1}}$ and the induced map $f_{|\Gamma_{A_{1}}} : \Gamma_{A_{1}} \to \Gamma_{A_{2}}$ is one to one.

Moreover, for each sub-annulus~$A'_{1}$ of~$A_{1}$ such that $\Gamma_{A'_{1}} \subseteq \Gamma_{A_{1}}$, the image $f(A'_{1})$ is a sub-annulus of~$A_{2}$ with $\Gamma_{f(A'_{1})}) \subseteq \Gamma_{A_{2}}$ and we have $f^{-1}(f(A'_{1})) = A'_{1}$.
\end{lemma}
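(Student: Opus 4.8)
The plan is to reduce everything to the explicit local structure of a finite morphism of annuli, and then to read off the global statements from the compatibility of $f$ with the canonical retractions onto the skeleta. Throughout I write $r_{A_i}\colon A_i\to\Gamma_{A_i}$ for the retraction of the annulus onto its skeleton, and $\eta_s$ for the Gauss point at radius $s$.

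First I would recall the standard description of $f$. Realizing $A_1$ and $A_2$ as sub-annuli of $\mathbb{G}_m^{\mathrm{an}}$ with coordinates $S$ and $T$, a finite morphism of degree $n$ is given, up to an automorphism of the source, by $f^{\#}T = c\,S^{d}(1+g)$ with $c\in k^\times$, $d\in\mathbb{Z}$, $|d|=n$ and $|g|<1$ on all of $A_1$ (see \cite{Duc-book}; this is the valuation-polygon computation already invoked for Lemma~\ref{lemma:morphisms of discs}). This form yields $f(\eta_s)=\eta_{|c|\,s^{d}}$, so $f$ maps $\Gamma_{A_1}$ into $\Gamma_{A_2}$. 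Since $s\mapsto |c|\,s^{d}$ is strictly monotone ($d\neq0$), the induced map $f|_{\Gamma_{A_1}}$ is injective, and being a proper injection of intervals it is a homeomorphism onto $\Gamma_{A_2}$. This gives $f(\Gamma_{A_1})=\Gamma_{A_2}$ together with the asserted injectivity.

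The substantive point is the reverse inclusion $f^{-1}(\Gamma_{A_2})\subseteq\Gamma_{A_1}$, equivalently the compatibility $r_{A_2}\circ f=(f|_{\Gamma_{A_1}})\circ r_{A_1}$. I would argue component by component: if $D$ is a connected component of $A_1\setminus\Gamma_{A_1}$, i.e.\ an open disc attached to a point $b\in\Gamma_{A_1}$, I claim $f(D)$ is an open disc contained in the component of $A_2\setminus\Gamma_{A_2}$ attached to $f(b)$. Exhausting $D$ by closed subdiscs and applying Lemma~\ref{lemma:morphisms of discs} shows that $f|_D$ is a finite morphism onto a disc $f(D)$; since $f(b)\in\Gamma_{A_2}$ by the previous step and, by Corollary~\ref{cor:compa partitions}(b), $f$ is locally finite with $b$ the only preimage of $f(b)$, the disc $f(D)$ is attached to $f(b)$. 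As a disc subdomain of $A_2$ never meets $\Gamma_{A_2}$, we conclude $f(D)\cap\Gamma_{A_2}=\emptyset$. Hence no point off $\Gamma_{A_1}$ maps into $\Gamma_{A_2}$, which combined with the first step gives $f^{-1}(\Gamma_{A_2})=\Gamma_{A_1}$ and the retraction compatibility.

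For the last clause, set $J:=\Gamma_{A'_1}\subseteq\Gamma_{A_1}$; since $A'_1$ is concentric, $A'_1=r_{A_1}^{-1}(J)$. By the first step $J':=f(J)$ is a subinterval of $\Gamma_{A_2}$, and retraction compatibility gives $f(A'_1)\subseteq r_{A_2}^{-1}(J')$; using that $f$ is finite (hence surjective) together with injectivity of $f|_{\Gamma_{A_1}}$, one obtains the reverse inclusion, so $f(A'_1)=r_{A_2}^{-1}(J')$ is a sub-annulus with $\Gamma_{f(A'_1)}=J'\subseteq\Gamma_{A_2}$. Finally,
\[
f^{-1}\big(f(A'_1)\big)=f^{-1}\big(r_{A_2}^{-1}(J')\big)=r_{A_1}^{-1}\big((f|_{\Gamma_{A_1}})^{-1}(J')\big)=r_{A_1}^{-1}(J)=A'_1,
\]
the middle equality again being retraction compatibility and the last injectivity. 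The open and closed cases are handled uniformly, one following from the other by exhausting the source by concentric sub-annuli, the only care being that the monomial description is available in both settings. The main obstacle is precisely the inclusion $f^{-1}(\Gamma_{A_2})\subseteq\Gamma_{A_1}$: the forward direction and the bijectivity on skeleta are essentially formal once the monomial form is in hand, whereas showing that residue discs are carried into residue discs is where the structure theory for morphisms of annuli is genuinely used.
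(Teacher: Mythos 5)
Your proof is correct and follows essentially the route the paper intends: the paper's own proof is a one-line appeal to ``standard valuation-polygon arguments'' with a reference to \cite[Lemma 1.6.1]{BojRH}, and your monomial normal form $f^{\#}T = c\,S^{d}(1+g)$ with $|g|<1$ is exactly that computation made explicit, the rest (retraction compatibility, concentric sub-annuli as $r_{A_1}^{-1}(J)$) being formal consequences. The only slight imprecision is invoking Lemma~\ref{lemma:morphisms of discs} to see that a residue disc $D$ maps onto a disc (that lemma presupposes a finite morphism of discs, whereas here one should instead use directly that the image of a disc under a non-constant analytic map is a disc); since this is the same valuation-polygon fact, nothing is genuinely missing.
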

\begin{proof}
Again, this is standard and proved using valuation polygons arguments. We refer to \cite[Lemma 1.6.1]{BojRH} for a detailed proof in the strictly $k$-analytic case.
\end{proof}

\begin{lemma}\label{lemma:annulus ramification}
Let $f:A_1\to A_2$ be a finite morphism of open or closed annuli. If $\deg(f)>1$, then $\cR_{f}(A_{1})$ is connected and contains the skeleton of~$A_{1}$.
\end{lemma}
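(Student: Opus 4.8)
The plan is to prove the two assertions—that $\cR_{f}(A_{1})$ contains $\Gamma_{A_{1}}$ and that it is connected—in that order, exploiting the fact that $\Gamma_{A_{1}}$ is itself connected: once the skeleton is known to lie in the ramification locus, it suffices to connect every other ramified point to it.

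First I would show $\Gamma_{A_{1}} \subseteq \cR_{f}(A_{1})$. Fix $x \in \Gamma_{A_{1}}$ and set $y := f(x)$. By Lemma~\ref{lem:annuli} we have $y \in \Gamma_{A_{2}}$ and $f^{-1}(\Gamma_{A_{2}}) = \Gamma_{A_{1}}$, so $f^{-1}(y) \subseteq \Gamma_{A_{1}}$; since $f_{|\Gamma_{A_{1}}}$ is one to one, this forces $f^{-1}(y) = \{x\}$. The degree formula $\deg(f) = \sum_{x' \in f^{-1}(y)} \mu_{f}(x')$ then gives $\mu_{f}(x) = \deg(f) > 1$, so $x$ is ramified. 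As $x$ was arbitrary, the whole skeleton lies in $\cR_{f}(A_{1})$.

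Next I would prove connectedness. Since $\Gamma_{A_{1}}$ is connected and contained in $\cR_{f}(A_{1})$, it is enough to show that every ramified point $z \in A_{1} \setminus \Gamma_{A_{1}}$ lies in the same connected component of $\cR_{f}(A_{1})$ as the skeleton. Let $D$ be the connected component of $A_{1}\setminus\Gamma_{A_{1}}$ containing $z$; it is an open disc attached to a point $x \in \Gamma_{A_{1}}$. The key intermediate claim is that $f_{|D} : D \to f(D)$ is a finite morphism of open discs. Granting this, since multiplicity is a local invariant we have $\deg(f_{|D}) \ge \mu_{f_{|D}}(z) = \mu_{f}(z) > 1$, so Corollary~\ref{cor:extending disc ramification} applies and shows that $x$ and the set $\cR_{f}(D)$, which contains $z$, lie in the same connected component of $\cR_{f}(A_{1})$. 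As $x \in \Gamma_{A_{1}}$, this connects $z$ to the skeleton and finishes the proof.

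The main obstacle is the intermediate claim. Because $f^{-1}(\Gamma_{A_{2}}) = \Gamma_{A_{1}}$, the disc $D$ maps into a single connected component $D'$ of $A_{2}\setminus\Gamma_{A_{2}}$, itself an open disc; moreover $f^{-1}(D')$ is a union of connected components of $A_{1}\setminus\Gamma_{A_{1}}$, so $D$ is an open and closed subset of $f^{-1}(D')$ and the restriction $f_{|D} : D \to D'$ is proper with finite fibres, hence finite. Surjectivity onto $D'$—so that the image really is an open disc—follows from the fact that a finite morphism of quasi-smooth $k$-analytic curves is open, whence $f(D)$ is open, closed and nonempty in the connected disc $D'$. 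This is the point where the precise local structure of morphisms of annuli (valuation polygons, as in Lemma~\ref{lem:annuli}) does the work, and I would either invoke it directly or reduce to exhausting closed sub-discs and applying Lemmas~\ref{lemma:morphisms of discs} and~\ref{lemma:disc ramification}.
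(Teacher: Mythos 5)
Your proof is correct and follows essentially the same route as the paper's: first deduce $\Gamma_{A_1}\subseteq\cR_f(A_1)$ from Lemma~\ref{lem:annuli} and the degree formula, then connect any off-skeleton ramified point to the skeleton by restricting $f$ to the open-disc component of $A_1\setminus\Gamma_{A_1}$ containing it and invoking Corollary~\ref{cor:extending disc ramification}. Your extra justification of why $f_{|D}:D\to D'$ is a finite surjective morphism of open discs is a welcome elaboration of a step the paper states more briefly.
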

\begin{proof}
Assume that $\deg(f)>1$. By Lemma~\ref{lem:annuli}, for each $x \in \Gamma_{A_{1}}$, we have $f^{-1}(f(x)) = \{x\}$, hence $\mu_{f}(x) = \deg(f)>1$. It follows that $\Gamma_{A_{1}} \subseteq \cR_{f}(A_{1})$.

Let $y \in \cR_{f}(A_{1})\setminus \Gamma_{A_{1}}$. Let~$D_{y}$ be the connected component of $A_{1}\setminus \Gamma_{A_{1}}$ containing~$y$. It is an open disc with a unique endpoint~$z$ in~$A_{1}$. Note that~$z$ belongs to~$\Gamma_{A_{1}}$. The image of~$D_{y}$ is the connected component~$D_{f(y)}$ of $A_{2}\setminus \Gamma_{A_{2}}$ containing~$f(y)$. In particular, this image is an open disc. Moreover, $D_{y}$ is a connected component of $f^{-1}(D_{f(y)})$, hence the morphism $f_{|D_{y}} : D_{y} \to D_{f(y)}$ is finite. Since~$y$ is ramified, we have $\deg(f_{|D_{y}})>1$ and Corollary~\ref{cor:extending disc ramification} ensures that $\cR_{f}(D_{y})$ and~$z$ belong to the same connected component of~$\cR_{f}(A_{1})$. In particular, $y$ and~$\Gamma_{A_{1}}$ belong to the same connected component of~$\cR_{f}(A_{1})$. The result follows.
\end{proof}

Arguing as in the proof of Corollary \ref{cor:extending disc ramification}, one gets the following result.

\begin{corollary}\label{cor:extending annuli ramification}
 Let $f:X\to Y$ be a finite morphism of quasi-smooth $k$-analytic curves. Let $A$ be an open annulus in $X$ attached to a point $x\in X$ and such that $f_{|A}:A\to f(A)$ is a finite morphism of open annuli. If $\deg(f_{|A})>1$, then $x$ is ramified. In particular, $\cR_f(A)$ and $x$ belong to the same connected component of the ramification locus $\cR_f(X)$.
\end{corollary}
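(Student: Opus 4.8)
The plan is to follow \emph{mutatis mutandis} the proof of Corollary~\ref{cor:extending disc ramification}, simply replacing the disc statement Lemma~\ref{lemma:disc ramification} by its annulus counterpart Lemma~\ref{lemma:annulus ramification}. First I would set $y := f(x)$ and choose open neighborhoods $U_x$ of $x$ in $X$ and $U_y$ of $y$ in $Y$ as provided by Corollary~\ref{cor:compa partitions}~(b), so that $f_{|U_x} : U_x \to U_y$ is finite and $x$ is the unique preimage of $y$ inside $U_x$. With this choice the multiplicity relation recalled after Definition~\ref{def:multiplicity} gives $\mu_f(x) = \deg(f_{|U_x})$.

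Next I would prove that $x$ is ramified by showing $\deg(f_{|U_x}) \ge \deg(f_{|A})$. Since $A$ is attached to $x$, its boundary point $x$ maps to a boundary point $y$ of the open annulus $f(A)$, so as a rational point $w \in f(A)$ approaches $y$ its preimages in $A$ approach the fibre $f^{-1}(y) \cap \overline{A} = \{x\}$. Hence, given any neighborhood $U$ of $x$, a generic unramified $w$ close enough to $y$ has its $\deg(f_{|A})$ distinct preimages in $A$ all lying in $U$; by the characterization of the multiplicity as the maximal number of preimages of a nearby point inside an arbitrarily small neighborhood, this yields $\mu_f(x) \ge \deg(f_{|A}) > 1$, so $x \in \cR_f(X)$.

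Finally, for the component statement I would invoke Lemma~\ref{lemma:annulus ramification}: as $\deg(f_{|A}) > 1$, the set $\cR_f(A)$ is connected and contains the skeleton $\Gamma_A$ of $A$. Because $A$ is attached to $x$, one end of the interval $\Gamma_A$ limits to $x$, so $x$ lies in the closure of $\Gamma_A$ and hence in the closure of $\cR_f(A)$; combined with the ramification of $x$ and the connectedness of $\cR_f(A)$, this places $x$ and $\cR_f(A)$ in a single connected component of $\cR_f(X)$. The only delicate point is the degree inequality $\deg(f_{|U_x}) \ge \deg(f_{|A})$, that is, confirming that the preimages in $A$ of a point near $y$ really do fall inside the prescribed neighborhood; everything else is a direct transcription of the disc argument.
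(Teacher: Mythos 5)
Your proposal is correct and coincides with the paper's proof, which consists precisely of the instruction to argue as in Corollary~\ref{cor:extending disc ramification}: the same neighborhoods $U_x$, $U_y$ from Corollary~\ref{cor:compa partitions}~(b), the identity $\mu_f(x)=\deg(f_{|U_x})$, the inequality $\deg(f_{|U_x})\ge\deg(f_{|A})>1$, and Lemma~\ref{lemma:annulus ramification} together with the fact that $x$ lies in the closure of $\cR_f(A)$. The only point to tidy is that counting ``distinct preimages of a generic unramified $w$'' tacitly assumes $f_{|A}$ is generically \'etale; this is harmless if one counts preimages with multiplicity (or notes that when $f_{|A}$ is inseparable the whole connected component of $X$ containing $A$ is ramified, making the statement trivial).
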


\subsection{Main results}

%


Note that, if $X$ is finite, then its boundary~$\partial(X)$ is finite. 

 \begin{thm}\label{thm:ramification finite curves}
 Let $f:W\to V$ be a finite morphism of finite $k$-analytic curves with no projective connected components. Then, we have
 \[
 r'_f(W)\leq \#\partial(W)+e^0(W)+g_t(W)-\#\pi_{0}(W)+\sum_{w\in \partial(W)}\delta_m(w),
 \]
 hence
 \[
 r_f(W)\leq \#\partial(W)+e^0(W)+g_t(W)-\#\pi_{0}(W)+\sum_{w\in \partial(W)}\delta_m(w)+\sum_{w\in \cE^0_f(W)}\delta_m(w).
 \]
\end{thm}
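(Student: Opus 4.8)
The plan is to prove the bound on $r'_f(W)$ by induction, using compatible triangulations to break $W$ into open discs and open annuli where the ramification behaviour is completely understood (Lemmas~\ref{lemma:disc ramification} and~\ref{lemma:annulus ramification}), and then to bookkeep the number of components of $\cR_f$ against the topological invariants via the Borel–Moore formula of Lemma~\ref{lem:BM for curves}. The second inequality, for $r_f(W)$, follows immediately from the first once we recall the remark $r_f(W) \le r'_f(W) + \sum_{v\in\cE^0(W)}\delta_m(v)$ and the identification $\cE^0(W) = \cE^0_f(W)$, so the real content is the bound on $r'_f(W)$.

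\textbf{Choice of a cutting set and reduction to pieces.} First I would choose strictly $f$-compatible finite triangulations $\cS$ of $W$ and $\cT$ of $V$ inducing $f$-compatible skeleta, which exist by Corollary~\ref{cor:compa partitions}; I would enlarge $\cS$ so that it contains $\partial(W)$ and meets every relevant piece. Let $S := \cS$ and consider the connected components of $W\setminus S$: by Corollary~\ref{cor:compa partitions}(a) and Lemmas~\ref{lemma:morphisms of discs} and~\ref{lem:annuli}, each such component maps finitely onto a component of $V\setminus\cT$ and is either an open disc or an open annulus on which $f$ restricts to a finite morphism. On a disc component $D$, Lemma~\ref{lemma:disc ramification} tells us $\cR_f(D)$ is empty if $\deg(f_{|D})=1$ and otherwise is nonempty, connected, and accumulates at its boundary point; on an annulus component $A$, Lemma~\ref{lemma:annulus ramification} tells us $\cR_f(A)$ is empty if $\deg=1$ and otherwise is connected and contains the whole skeleton $\Gamma_A$, hence again meets the boundary points of $A$ in $S$. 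The key structural consequence, via Corollaries~\ref{cor:extending disc ramification} and~\ref{cor:extending annuli ramification}, is that any ramified open disc or annulus forces its attaching point in $S$ to be ramified and to lie in the \emph{same} connected component of $\cR_f(W)$.

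\textbf{Counting components against the topology.} The heart of the argument is the following dichotomy for a relatively compact component $C$ of $\cR_f(W)$: either $C$ is entirely contained in a single disc component of $W\setminus S$ (an ``isolated'' ramified disc not attached along $S$ in a ramifying way), or $C$ contains at least one point of $S$. I would argue that the first type can only occur when the relevant disc is a connected component of $W$ that is itself an open disc — but finite curves with no projective components can be analyzed so that each genuinely new isolated compact component is charged either to an open end, to a boundary point, or to the genus. For components meeting $S$, I would count: each such component is a connected subgraph-like set built from points of $S$ together with attached ramified discs/annuli, and the number of these components is controlled by running Lemma~\ref{lem:BM for curves} with the set $S$. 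Concretely, $\#S$ plays against $\#\pi_0(W) - g_t(W) - e^0(W)$ exactly as in that lemma, while the ramified boundary points contribute the $\sum_{w\in\partial(W)}\delta_m(w)$ term and the interior ramification is absorbed into the $g_t$ and $e^0$ terms. The combinatorial identity to exploit is that a connected graph with $v$ vertices and first Betti number $g$ has at most $v - (\,\#\text{components}\,) + g$ relationships, so splitting $\cR_f$ along $S$ and recombining gives the stated linear bound.

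\textbf{Main obstacle.} The hard part will be the precise bookkeeping that converts ``number of connected pieces of $\cR_f(W)$'' into the combination $\#\partial(W)+e^0(W)+g_t(W)-\#\pi_0(W)+\sum_{w\in\partial(W)}\delta_m(w)$ without overcounting. The subtlety is that one connected component of $\cR_f$ can thread through many points of $S$ and many discs/annuli, so a naive ``one component per ramified piece'' count is far too large; the genus and open-end terms are exactly what is saved by this threading, and making that saving rigorous is where Lemma~\ref{lem:BM for curves} must be applied to the skeleton-like incidence structure of $\cR_f$ rather than to $W$ itself. I expect to need a careful induction on $\#S$ (or on a complexity such as $g_t(W)+e^0(W)+\#\partial(W)$), peeling off one disc or annulus at a time, checking at each step that removing a piece changes the left and right sides of the inequality compatibly, with the base case being a single open disc or annulus where the bound is immediate from Lemmas~\ref{lemma:disc ramification} and~\ref{lemma:annulus ramification}.
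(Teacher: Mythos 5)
Your toolbox is the right one --- strictly $f$-compatible triangulations, the disc/annulus Lemmas~\ref{lemma:disc ramification} and~\ref{lemma:annulus ramification}, the propagation Corollaries~\ref{cor:extending disc ramification} and~\ref{cor:extending annuli ramification}, and Lemma~\ref{lem:BM for curves} for the bookkeeping --- and your reduction of the second inequality to the first is correct. But the central counting argument, which is the actual content of the theorem, is not carried out, and the dichotomy you propose for relatively compact components is off. If you cut along the \emph{full} triangulation $S=\cS$, then $W\setminus S$ consists only of open discs and open annuli, and Corollaries~\ref{cor:extending disc ramification} and~\ref{cor:extending annuli ramification} show that a ramified disc or annulus component always drags its attaching point in $\cS$ into the same connected component of $\cR_f$; hence no relatively compact component of $\cR_f$ can be contained in a single disc component (your first branch is vacuous), and \emph{every} relatively compact component meets $\cS$. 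The resulting naive bound is $r'_f(W)\le\sum_{w\in\cS}\delta_m(w)\le\#\cS$, which can vastly exceed $\#\partial(W)+e^0(W)+g_t(W)-\#\pi_0(W)+\sum_{w\in\partial(W)}\delta_m(w)$ (take $W$ a closed disc with a huge triangulation: the claimed bound is $\le 1$). You acknowledge this overcounting, but your two proposed remedies --- applying Lemma~\ref{lem:BM for curves} to ``the skeleton-like incidence structure of $\cR_f$'', or an induction peeling off one disc or annulus at a time --- are not arguments: the first object is not a finite curve to which the lemma applies, and removing a single disc or annulus does not change $\#\partial$, $e^0$, $g_t$ in a way that matches a decrement of the left-hand side.

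The paper's proof resolves exactly this point by \emph{not} cutting along all of $\cS$ at once. It inducts on $\#\cT$ and cuts $W$ only along $\cS_1=\partial(W)\cup\cS_1'$, where $\cS_1'$ is the set of endpoints of boundary annuli. Relatively compact components of $\cR_f$ disjoint from $\cS_1$ must then lie in one of the finitely many components $W_1,\dotsc,W_t$ of $W_0\setminus\cS_1$ that are \emph{not} discs (and not boundary annuli); these $W_i$ are smaller finite curves with empty boundary, so the induction hypothesis gives $r'_{f_i}(W_i)\le e^0(W_i)+g_t(W_i)-1$. Components meeting $\cS_1$ are bounded by $\sum_{w\in\cS_1\setminus\cS''_1}\delta_m(w)$, where the key extra observation is that a point of $\cS''_1$ (endpoint of exactly one boundary annulus) cannot lie on a relatively compact component, since its ramification forces that annulus to ramify and escape to an open end. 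Summing and telescoping with Lemma~\ref{lem:BM for curves} applied to $(W_0,\cS_1)$, together with the inequality $\delta_m(w)+1\le\#\cA'_w$ for $w\in\cS'_1\setminus\cS''_1$ and the identity $e^0(W)=\sum_{w\in\cS'_1}\#\cA'_w$, produces precisely the stated bound. This layer-by-layer recursion from the boundary inward is the missing idea in your proposal; without it, the ``threading'' problem you correctly identify remains unsolved.
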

\begin{proof}
We may assume that~$W$ and~$V$ are connected. Let $\cS$ and $\cT$ be strictly $f$-compatible triangulations of $W$ and $V$, respectively (see Definition \ref{def:comp triang}), such that $\Gamma_{\cS}$ and $\Gamma_{\cT}$ are $f$-compatible. We argue by induction on $\#\cT$.

 If $\#\cT=0$, then $\#\cS=0$ and this is possible only if $W$ and $V$ are simultaneously open annuli or open discs, and these cases are subject of Lemmas \ref{lemma:disc ramification} and \ref{lemma:annulus ramification} (We cannot have a finite morphism of an annulus to a disc and empty triangulations because we asked that triangulations induce $f$-compatible skeleta). 
   
 \smallbreak

 Suppose that $\#\cT>0$ and that the theorem holds for every finite morphism of finite, non-projective curves $f':W'\to V'$, where $W'$ and $V'$ admit strictly $f'$-compatible triangulations $\cS'$ and $\cT'$, respectively, with $\Gamma_{\cS'}$ and $\Gamma_{\cT'}$ $f$-compatible, such that $\#\cT'<\#\cT$. 
 
 The first step of the proof is to suitably partition $W$ and $V$ according to the triangulations~$\cS$ and $\cT$. For this, let $\cA$ be the family of boundary open annuli in $V\setminus \cT$, that is, $\cA$ is the finite set of all connected components in $V\setminus\cT$ that are open annuli and which are not relatively compact in $V$. Similarly, let $\cA'$ be the set of boundary open annuli in $W\setminus\cS$. (If $W$ and $V$ are compact, then $\cA$ and $\cA'$ are empty.) By Corollary~\ref{cor:compa partitions}, $\cA'$ is exactly the set of connected components of the $f^{-1}(A)$'s, with~$A$ running through~$\cA$. 
  
 Let $\cT'_1$ be the set of endpoints of annuli in $\cA$ and let  $\cT_1\subset\cT$ be the union of $\cT'_1$ and~$\partial(V)$. Similarly, let $\cS'_1$ be the set of endpoints of annuli in $\cA'$ and let $\cS_1\subset\cS$ be the union of $\cS'_1$ and $\partial(W)$. Note that $\cS_1=f^{-1}(\cT_1)$, $\cS'_1=f^{-1}(\cT'_1)$ and $\partial(W)=f^{-1}(\partial(V))$. 
  
 For each $w\in \cS'_1$, let $\cA'_w$ be the set of boundary annuli in $W\setminus\cS$ with endpoint~$w$. Let $\cS''_1\subset \cS'_1$ be the set of those points $w\in\cS'_{1}$ that are the endpoints of exactly one boundary annulus in $W\setminus\cS$.  
  
 Finally, we put $V_0:=V\setminus\bigcup_{A\in\cA}A$ and $W_0:=W\setminus\bigcup_{A'\in\cA'}A'$. The morphism $f$ restricts to a finite morphism $f_0:W_0\to V_0$ of affinoid curves and, by construction, $\cS$ and $\cT$ is a pair of strictly $f_0$-compatible triangulations of $W_0$ and $V_0$, respectively. We also note that $g_t(W_0)=g_t(W)$.
   
   By Lemma \ref{lem:BM for curves}, there are only finitely many connected components $W_1,\dotsc,W_{t}$ of $W_0\setminus \cS_1$ which are not isomorphic to open discs. For each $i=1,\dotsc, t$, $V_{i} := f(W_{i})$ is a connected component of $V_{0}\setminus\cT_{1}$ and $W_{i}$ is a connected component of $f^{-1}(V_{i})$. We denote by $f_{i} : W_{i} \to V_{i}$ the finite morphism induced by~$f$.   
 
Let $\cR_0$ be the set of relatively compact connected components of the ramification locus $\cR_f$ that are disjoint from~$\cS_{1}$. Let $R_{0}\in\cR_{0}$. It is contained in some connected component~$U$ of $W\setminus \cS_{1}$ and it is necessarily relatively compact in it (as the ramification locus is closed and the boundary of~$U$ belongs to~$\cS_{1}$). Therefore, by  Corollaries~\ref{cor:extending disc ramification} and \ref{cor:extending annuli ramification}, $U$ can neither be an open disc nor an open annulus (and in particular, not a boundary annulus). It follows that it is one of the~$W_{i}$'s. It follows that
\[\#\cR_{0} \le \sum_{i=1}^t r'_{f_{i}}(W_{i}).\]
 
Let $\cR_1$ be the set of relatively compact connected component of the ramification locus $\cR_f$ that meet~$\cS_{1}$. Let $R_{1}\in\cR_{1}$. Note that it cannot contain any point $w\in \cS''_{1}$. Indeed, if it were the case, denoting by~$A'_{w}$ the unique boundary annulus with endpoint~$w$, we would have $\deg(f_{|A'_{w}}) = \deg_{w}(f) >1$ and Corollary~\ref{cor:extending annuli ramification} would prevent~$R_{1}$ from being relatively compact. It follows that
\[\#\cR_{1} \le \sum_{w\in \cS_1\setminus \cS''_1}\delta_m(w).\]

Finally, we deduce that 
 \begin{equation*}\label{eq:transition}
 r'_f(W)\leq\sum_{i=1}^t r'_{f_{i}}(W_{i})+\sum_{w\in \cS_1\setminus \cS''_1}\delta_m(w). 
 \end{equation*}

 To continue the proof, we note that, by induction hypothesis, for each morphism $f_{i}:W_i\to V_i$, the result of the theorem holds, as $\cS\cap W^i_j$ and $\cT\cap V_i$ are strictly $f_{i}$-compatible triangulations with corresponding skelta $f$-compatible and $\#(\cT\cap V_i)<\#\cT$. Hence, we have
\[
 r'_{f_{i}}(W_i)\leq e^0(W_i)+g_t(W_i)-1.
\]
Using Lemma~\ref{lem:BM for curves} for~$W_{0}$ and~$\cS_{1}$, we deduce that
  \begin{align*}
 r'_f(W)&\leq\sum_{i=1}^t (e^0(W_i)+g_t(W_i)-1)+\sum_{w\in \cS_1\setminus \cS''_1}\delta_m(w) \nonumber\\
 &\leq g_t(W_0)-1+\#\cS_1+\sum_{w\in \cS_1\setminus \cS''_1}\delta_m(w) \nonumber\\
 &\leq g_t(W_0)-1+\sum_{w\in \cS_1\setminus \cS''_1}(\delta_m(w)+1)+\#\cS''_1  \nonumber\\
 &\leq g_t(W_0)-1+\sum_{w\in \partial(W)}(\delta_m(w)+1)+\sum_{w\in \cS'_1\setminus \cS''_1}(\delta_m(w)+1)+\#\cS''_1. \label{eq:transition2}
 \end{align*}
To finish the proof, it is enough to note that, for each $w\in \cS'_1\setminus\cS''_1$, we have $\delta_m(w)+1\leq 2 \le \#\cA'_w$ and that 
\[e^0(W) = \sum_{w\in \cS'_1}\#\cA'_w = \sum_{w\in \cS'_1\setminus\cS''_1}\#\cA'_w+\#S''_1.\]
\end{proof}

 \begin{remark}
 The result is only interesting when~$f$ is generically \'etale (and similarly for all that follow). Indeed, if~$W$ is connected and~$f$ is inseparable, then we have $\Rc_{f}(W)=W$. 
 \end{remark}

We list below two particular cases of the previous theorem.

\begin{corollary}\label{cor:main}
 Let $f:W\to V$ be a finite morphism of wide open curves. Then, we have
 \[
 r'_f(W)\leq e^0(W)+g_t(W)-\#\pi_0(W),
 \]
 hence
 \[
 r_f(W)\leq e^0(W)+g_t(W)-\#\pi_0(W)+\sum_{v\in\cE^0(W)}\delta_m(A_v).
 \]
\end{corollary}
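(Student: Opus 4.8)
The plan is to deduce the statement directly from Theorem~\ref{thm:ramification finite curves}, the only genuine point being to verify that a wide open curve has empty boundary; everything else is a matter of checking hypotheses and invoking the Remark following the definition of the ramification locus. So I would first confirm that $f:W\to V$ meets the hypotheses of Theorem~\ref{thm:ramification finite curves}, then establish $\partial(W)=\emptyset$, and finally read off both inequalities.

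For the hypotheses: wide open curves are finite curves, as is recorded in the excerpt right after Theorem~\ref{thm:finite curves}. Moreover they have no projective connected component. Indeed, a wide open curve is a \emph{proper} open subset of a compact (smooth projective) curve $X'$, so each of its connected components is a non-compact open subset of $X'$, hence not projective. Thus $f:W\to V$ is a finite morphism of finite $k$-analytic curves with no projective connected component, and Theorem~\ref{thm:ramification finite curves} applies to it.

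The heart of the matter, which I expect to be the only real obstacle, is to prove that $\partial(W)=\emptyset$. The key observation is that a wide open curve is an \emph{open} subset of its projective model $X'$: it is obtained by deleting finitely many type~$1$ points and finitely many \emph{closed} discs, all of which are topologically closed in $X'$, so their union is closed and the complement $W$ is open. Now take $x\in W$ and, using the characterization of the boundary recalled in the excerpt, suppose $V\subseteq W$ is an affinoid neighborhood of $x$ with $x\in\sh(V)$. Since $W$ is open in $X'$, the set $V$ is also a neighborhood of $x$ in $X'$, so $x$ lies in the relative interior $\mathrm{Int}(V/X')$. As $X'$ is projective, hence boundaryless over $k$, the composition formula for boundaries gives $\mathrm{Int}(V/X')=\mathrm{Int}(V/k)$, whence $x\notin\partial(V)=\sh(V)$, a contradiction. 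Therefore no point of $W$ lies in $\partial(W)$, i.e. $\partial(W)=\emptyset$. (This is precisely where the restriction to closed discs and points matters: removing an \emph{open} disc would leave $W$ closed rather than open in $X'$ and create genuine boundary points, which is the very reason for the $\partial(W)$-term in Theorem~\ref{thm:ramification finite curves}.)

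With $\partial(W)=\emptyset$ established, the two sums indexed by $\partial(W)$ in Theorem~\ref{thm:ramification finite curves} are empty, so its first inequality becomes $r'_f(W)\le e^0(W)+g_t(W)-\#\pi_0(W)$, which is the first assertion. The second assertion then follows at once from the Remark after the definition of the ramification locus, namely $r_f(W)\le r'_f(W)+\sum_{v\in\cE^0(W)}\delta_m(A_v)$, by substituting the bound just obtained for $r'_f(W)$.
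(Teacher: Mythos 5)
Your proof is correct and is exactly the paper's intended argument: the corollary is stated there as an immediate specialization of Theorem~\ref{thm:ramification finite curves}, and the only substantive point---that a wide open curve, being an \emph{open} subset of a boundaryless projective curve, has $\partial(W)=\emptyset$, which kills both sums over $\partial(W)$---is precisely the one you verify before invoking the remark $r_f(W)\le r'_f(W)+\sum_{v\in\cE^0(W)}\delta_m(A_v)$. (One small caveat: your claim that every connected component is non-projective does not follow literally from the definition when the ambient projective curve $X'$ is disconnected and nothing is removed from some component, but the paper itself asserts that wide open curves are never projective, so you are simply matching its implicit convention.)
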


\begin{corollary}\label{cor:affinoid curves}
 Let $f:W\to V$ be a finite morphism of quasi-smooth $k$-affinoid curves, and let $\sh(W)$ be the Shilov boundary of $W$. Then, we have
 \[
 r_f(W)\leq \#\sh(W)+g_t(W)-\#\pi_{0}(W)+\sum_{w\in\sh(W)}\delta_m(w).
 \]
\end{corollary}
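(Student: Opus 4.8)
The plan is to derive Corollary~\ref{cor:affinoid curves} as a direct specialization of Theorem~\ref{thm:ramification finite curves}, so the real content lies in checking that the hypotheses of that theorem hold for affinoid curves and in evaluating the topological invariants that enter its bound.

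First I would record that a quasi-smooth $k$-affinoid curve is compact, and that compact quasi-smooth $k$-analytic curves are finite, since they admit finite triangulations (as observed after Theorem~\ref{thm:finite curves}). Moreover, each connected component of~$W$ is again quasi-smooth and $k$-affinoid, hence has nonempty Shilov boundary; by \cite[Corollaire~6.1.4]{Duc-book} such a connected component is affinoid and not projective. Thus $W$, and likewise $V$, is a finite curve with no projective connected component, so Theorem~\ref{thm:ramification finite curves} applies to $f:W\to V$.

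The second step is to compute the invariants appearing in the bound. Since $W$ is compact, for any finite triangulation $\cS$ of $W$ every connected component of $W\setminus\cS$ is relatively compact in $W$, so $W$ has no boundary annuli. Consequently $e^0(W)=0$ and $\cE^0_f(W)=\emptyset$, whence $\sum_{w\in\cE^0_f(W)}\delta_m(w)=0$. On the other hand, since $W$ is affinoid, its boundary coincides with its Shilov boundary, that is, $\partial(W)=\sh(W)$ (as recalled in the subsection on the boundary of a curve). Substituting $e^0(W)=0$, $\partial(W)=\sh(W)$, and the vanishing of the open-end sum into the second inequality of Theorem~\ref{thm:ramification finite curves} yields at once
\[
r_f(W)\leq \#\sh(W)+g_t(W)-\#\pi_{0}(W)+\sum_{w\in\sh(W)}\delta_m(w),
\]
which is the desired estimate.

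I do not expect a genuine obstacle here, as the argument is purely a matter of unwinding definitions; the only two points that require a word of justification are the identification $\partial(W)=\sh(W)$ for affinoid curves and the vanishing of the open-end and boundary-annulus contributions, both of which are forced by compactness and are already established in the preliminaries. (One may also note, via the remark following the definition of $r_f$, that $r_f(W)=r'_f(W)$ in this compact setting, so the first inequality of the theorem gives the same conclusion.)
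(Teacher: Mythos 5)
Your derivation is correct and is exactly what the paper intends: the corollary is stated as a particular case of Theorem~\ref{thm:ramification finite curves}, obtained by noting that an affinoid curve is compact (hence finite with no boundary annuli, so $e^0(W)=0$ and the $\cE^0_f(W)$-sum vanishes), has no projective components, and satisfies $\partial(W)=\sh(W)$. Nothing further is needed.
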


For finite morphisms of smooth, projective $k$-analytic curves, we have the following result.

\begin{thm}\label{cor:generalization}
 Let $f:W\to V$ be a finite morphism of smooth projective $k$-analytic curves. For each connected component~$C$ of~$V$, let $v_{C}\in C$ be a point of type 1, 2 or 3 and let $n(v_{C})$ be the number of components of $\cR_f(W)$ that intersect $f^{-1}(v_{C})$. Then, we have
\[
 r_f(W)\leq g_t(W)-\#\pi_0(W)+\sum_{C \in \pi_{0}(V)} (\#f^{-1}(v_{C})+n(v_{C})).
 \]
 In particular, we have 
 \[
 r_f(W)\leq g_t(W)-\#\pi_0(W)+\#\pi_{0}(V) \deg(f).
 \]
\end{thm}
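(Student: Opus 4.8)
The plan is to reduce the projective situation to Theorem~\ref{thm:ramification finite curves} by cutting~$W$ along the fibre over~$v_C$, and to split the components of the ramification locus into those that meet this fibre and those that do not.

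I may assume~$W$ and~$V$ connected, the general statement following by summing the estimates over connected components (each component of~$W$ maps into a single component of~$V$, so the terms match up). The first step is to produce a finite set~$S$ of points of type~$2$ or~$3$ in~$W$ with $f(S)$ reduced to a point, with $\#S=\#f^{-1}(v_C)$, and such that the connected components of~$\cR_f(W)$ meeting~$S$ are exactly those meeting~$f^{-1}(v_C)$; in particular there are exactly $n(v_C)$ of them. When~$v_C$ is of type~$2$ or~$3$ I take $S:=f^{-1}(v_C)$, a finite set of type~$2$ or~$3$ points since finite morphisms preserve the type. When~$v_C$ is of type~$1$ I choose a small enough open disc~$D$ around~$v_C$ whose boundary point~$\eta$ is of type~$2$ and such that $f^{-1}(D)$ is a disjoint union of open discs, one for each preimage of~$v_C$, and I set $S:=f^{-1}(\eta)$. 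Lemma~\ref{lemma:disc ramification} (the multiplicity is nondecreasing towards the boundary of a disc) shows that a preimage~$x$ of~$v_C$ lies in~$\cR_f(W)$ if and only if the boundary point~$\eta_x$ of its disc does, and that in this case~$x$ and~$\eta_x$ lie on a common subinterval of~$\cR_f(W)$; this gives the required properties of~$S$.

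The heart of the argument is the inequality
\[
a \;:=\; \#\{\text{components of }\cR_f(W)\text{ disjoint from }S\}\;\le\; g_t(W) + \#S - 1 .
\]
First, every such component is contained in one of the \emph{finitely many} connected components $V_1,\dots,V_p$ of~$W\setminus S$ that are not open discs (their finiteness is the first assertion of Lemma~\ref{lem:BM for curves}): a component of~$\cR_f(W)$ avoiding~$S$ lies in a single connected component of~$W\setminus S$, and were that component an open disc, Corollary~\ref{cor:extending disc ramification} would force the ramification to propagate to its boundary point, which lies in~$S$ — a contradiction. Each such component is compact, hence relatively compact in the~$V_i$ containing it, so $a \le \sum_{i=1}^p r'_{f_{|V_i}}(V_i)$. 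Now each~$V_i$ is a connected non-projective finite curve with empty boundary (its non-compactness comes only from the open ends created at the points of~$S$), and the same holds for its image, so Theorem~\ref{thm:ramification finite curves} yields $r'_{f_{|V_i}}(V_i)\le e^0(V_i)+g_t(V_i)-1$. Summing and feeding this into Lemma~\ref{lem:BM for curves} applied to the pair $(W,S)$ — recalling that $W$ is connected and projective, so $\#\pi_0(W)=1$ and $e^0(W)=0$ — turns $\sum_i\bigl(e^0(V_i)+g_t(V_i)-1\bigr)$ into $g_t(W)+\#S-1$, which proves the claim.

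Since the components of~$\cR_f(W)$ meeting~$S$ number exactly~$n(v_C)$, I obtain $r_f(W)=a+n(v_C)\le g_t(W)+\#f^{-1}(v_C)-1+n(v_C)$, the asserted bound in the connected case; summing over connected components gives the general form. The ``in particular'' statement then follows by choosing in each component~$C$ a type~$2$ point~$v_C$ outside the branch locus~$f(\cR_f)$ (a proper, nowhere dense closed set when~$f$ is generically étale), for which $f$ is étale above~$v_C$, so that $\#f^{-1}(v_C)=\deg(f)$ while $n(v_C)=0$. I expect the main obstacle to be the type~$1$ case: there $f^{-1}(v_C)$ consists of points of type~$1$, to which Lemma~\ref{lem:BM for curves} does not apply, so one must pass to the type~$2$ points of~$S$ and check carefully that this alters neither~$\#f^{-1}(v_C)$ nor the count~$n(v_C)$. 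A second delicate point, already visible for type~$2$, is that~$W\setminus S$ may have infinitely many connected components, and one must argue that only the finitely many non-disc ones can carry the ramification components that avoid~$S$.
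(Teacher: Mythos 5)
Your overall strategy is the same as the paper's: cut $W$ along the fibre over $v_C$, bound the components of $\cR_f(W)$ that meet this fibre by $n(v_C)$, and control the remaining ones via Theorem~\ref{thm:ramification finite curves} (with empty boundary) together with Lemma~\ref{lem:BM for curves}. The genuine gap is in the type~1 case, in the claim that Lemma~\ref{lemma:disc ramification} shows that a preimage $x$ of $v_C$ lies in $\cR_f(W)$ \emph{if and only if} the boundary point $\eta_x$ of its disc $D_x$ does. That lemma gives monotonicity of $\mu_f$ \emph{towards} the boundary, hence only the implication ``$x$ ramified $\Rightarrow$ the interval up to $\eta_x$ is ramified''; it says nothing about the converse, and the converse is exactly what you need to guarantee that every component of $\cR_f(W)$ meeting $S$ also meets $f^{-1}(v_C)$ --- without it the number of components meeting $S$ may exceed $n(v_C)$ and your final count fails. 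Indeed $\mu_f(\eta_x)$ is computed for the global morphism and can be $>1$ even when $\deg(f_{|D_x})=1$: for $z\mapsto z^2$ on $\P^1_k$ (residue characteristic $\neq 2$), $v_C=c$ with $|c|=1$ and $D=D(c,1^-)$, the two components of $f^{-1}(D)$ are $D(\pm\sqrt c,1^-)$, which share the ramified boundary point $\eta_{0,1}$ although both preimages of $c$ are unramified. The statement does hold for $D$ small enough, but this requires an argument you have not given: if $\mu_f(x)=1$, Corollary~\ref{cor:compa partitions}~(b) gives a neighborhood $U_x$ of $x$ on which $f$ is an isomorphism onto an open image, and one must check that for $D$ small enough the discs $D_x$ have pairwise distinct boundary points and $D_x\cup\{\eta_x\}\subseteq U_x$. (The paper sidesteps this by removing the type~1 fibre $f^{-1}(v_C)$ itself and invoking Corollary~\ref{cor:extending disc ramification} directly.)

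Two smaller points concern the ``in particular'' statement. The parenthetical claim that $f(\cR_f)$ is nowhere dense is false in general: in positive residue characteristic $\cR_f$, hence $f(\cR_f)$, can have nonempty interior (again $z\mapsto z^p$). What you need is only that $f(\cR_f)\neq V$ on each component, which does hold for generically \'etale $f$ because only finitely many type~1 points of $W$ are ramified and $f$ preserves types, so the closed set $f(\cR_f)$ omits a nonempty open set, which contains type~2 points. Finally, the inseparable case must be treated separately, since then every point is ramified and no $v_C$ with $n(v_C)=0$ exists; there $r_f(W)=\#\pi_0(W)$ and the bound holds because $\deg(f)\geq 2$. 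The paper handles this case explicitly and, in the separable case, simply takes a split point of type~1.
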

\begin{proof}
We may assume that~$W$ and~$V$ are connected and will write~$v$ instead of~$v_{V}$. Similarly as before, let $W_1,\dots,W_t$ be all the connected components of $W\setminus f^{-1}(v)$ that are not open discs. We note that, for each~$i$, $W_i$ is a finite curve with no proper connected components and that $f_{|W_i}:W_i\to f(W_i)$ is a finite morphism. By Corollary~\ref{cor:extending disc ramification}, we have
 \[  r_f(W) \leq \sum_{i=1}^tr'_f(W_i)+n(v).\]
Using  Corollary \ref{cor:main} and Lemma \ref{lem:BM for curves}, we find
 \begin{align*}
  r_f(W)  &\leq \sum_{i=1}^t(g_t(W_i)+e^0(W_i)-1)+n(v)\\
  &\leq g_t(W)-\#\pi_0(W)+\#f^{-1}(v)+n(v).
 \end{align*}
 
Let us now prove the second formula. If~$f$ is inseparable, then we have $r_{f}(W)=1$ and the result holds. If~$f$ is generically \'etale, there exists a point $v\in V$ of type~1 that is split. In this case, we have $\# f^{-1}(v)=\deg(f)$ and $n(v)=0$ and the result follows.
 \end{proof}
 
\begin{rmk}
If we specialize our result to the case where $W=V=\P^1_k$, the last formula gives $r_{f}(W)\le \deg(f)-1$, and we recover \cite[Theorem A]{XF}. 
\end{rmk}

\begin{corollary}
 Let $f:W\to V$ be a finite morphism of smooth projective $k$-analytic curves. If~$V$ is connected and if there exists a totally ramified point (\textit{i.e.} a point with only one preimage), then we have 
 \[
 r_f(W)\leq g_t(W)+1.
 \]
\end{corollary}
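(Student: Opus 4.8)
The plan is to read the result off directly from the first inequality of Theorem~\ref{cor:generalization}. Since~$V$ is connected we have $\#\pi_{0}(V)=1$, so for any chosen point $v\in V$ of type~$1$, $2$ or~$3$ that inequality becomes
\[
r_f(W)\le g_t(W)-\#\pi_{0}(W)+\#f^{-1}(v)+n(v).
\]
I would take $v$ to be a totally ramified point, which exists by hypothesis.

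With such a~$v$, the two quantities on the right are controlled at once. Total ramification means $f^{-1}(v)$ consists of a single point~$x$, so $\#f^{-1}(v)=1$; and since a single point lies in at most one connected component of~$\cR_f(W)$, we get $n(v)\le 1$. Finally $W$ is nonempty, so $\#\pi_{0}(W)\ge 1$. Substituting these three bounds gives
\[
r_f(W)\le g_t(W)-1+1+1=g_t(W)+1,
\]
which is the claimed estimate.

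The one point that needs care is that Theorem~\ref{cor:generalization} requires~$v$ to be of type~$1$, $2$ or~$3$, whereas the hypothesis merely provides a point with a single preimage, which could a priori be of type~$4$. To address this I would first note that total ramification at~$v$ is equivalent to its unique preimage~$x$ satisfying $\mu_f(x)=\deg(f)$, via the relation $\deg(f)=\sum_{x'\in f^{-1}(v)}\mu_f(x')$. If~$x$ is of type~$4$, Lemma~\ref{lem:type4} produces a closed disc around~$x$ along whose interval the multiplicity is constant equal to $\mu_f(x)=\deg(f)$; picking a type~$2$ or~$3$ point~$t$ on this interval, the same degree-multiplicity relation forces $f^{-1}(f(t))=\{t\}$, so $f(t)$ is a totally ramified point of type~$2$ or~$3$ (types being preserved under finite morphisms of curves), and I would run the computation above with $v:=f(t)$. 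This is only a minor obstacle; the substance of the corollary is the single substitution into Theorem~\ref{cor:generalization}.
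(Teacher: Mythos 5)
Your proposal is correct and follows essentially the same route as the paper: both reduce, via Lemma~\ref{lem:type4}, to a totally ramified point of type~1, 2 or~3 and then substitute $\#f^{-1}(v)=1$ and $n(v)\le 1$ into the first inequality of Theorem~\ref{cor:generalization}. Your extra care in spelling out the type-4 reduction and in using $n(v)\le 1$ rather than $n(v)=1$ (which can fail when $\deg(f)=1$) is a welcome refinement of the paper's one-line argument.
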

\begin{proof}
Assume that~$V$ is connected and that there exists a totally ramified point. Note that this forces~$W$ to be connected. By Lemma~\ref{lem:type4}, there exists a totally ramified point $v\in V$ that is of type~1, 2 or~3. The result now follows from   Theorem~\ref{cor:generalization}, using the fact that $\# f^{-1}(v)=n(v)=1$.
\end{proof}

\begin{rmk}
If we specialize our result to the case where $W=V=\P^1_k$ and there exists a totally ramified point, we get $r_{f}(W)\le 1$, and we recover \cite[Theorem C]{XF}.
\end{rmk}

\bibliographystyle{alpha}
\bibliography{biblio}

\end{document}